\tikzset{snake it/.style={decorate, decoration=snake}}
\tikzstyle{edge}=[very thick]
\definecolor{bostonuniversityred}{rgb}{0.8, 0.0, 0.0}
\definecolor{arsenic}{rgb}{0.23, 0.27, 0.29}
\tikzstyle{diredge}=[postaction={decorate,decoration={markings,
\tikzset{
    arrow/.style={decoration={markings, mark=at position 0.7 with
    {\fill(-0.09*#1,-0.03*#1) -- (0,0) -- (-0.09*#1,0.03*#1) -- cycle;}}, postaction={decorate}},
    arrow/.default=1
}
\tikzset{
    arow/.style={decoration={markings, mark=at position 1 with
    {\fill(-0.09*#1,-0.03*#1) -- (0,0) -- (-0.09*#1,0.03*#1) -- cycle;}}, postaction={decorate}},
    arow/.default=1
}
\tikzset{
    arrrow/.style={decoration={markings, mark=at position 0.9 with
    {\fill(-0.09*#1,-0.03*#1) -- (0,0) -- (-0.09*#1,0.03*#1) -- cycle;}}, postaction={decorate}},
    arow/.default=1
}
\newcommand{\fitellipsis}[2] % first and second node names without parentheses
{\draw [fill=white]let \p1=(#1), \p2=(#2), \n1={atan2(\y2-\y1,\x2-\x1)}, \n2={veclen(\y2-\y1,\x2-\x1)}
    in ($ (\p1)!0.5!(\p2) $) ellipse [ x radius=\n2/2+0cm, y radius=1.1cm, rotate=\n1];
}
\newcommand{\Fitellipsis}[2] % first and second node names without parentheses
{\draw [fill=white]let \p1=(#1), \p2=(#2), \n1={atan2(\y2-\y1,\x2-\x1)}, \n2={veclen(\y2-\y1,\x2-\x1)}
    in ($ (\p1)!0.5!(\p2) $) ellipse [ x radius=\n2/2+0cm, y radius=1.4cm, rotate=\n1];
}
\theoremstyle{plain}
\newtheorem*{thm*}{Theorem}
\newtheorem{thm}{Theorem}[section]
\Crefname{thm}{Theorem}{Theorems}
\newtheorem*{lem*}{Lemma}
\newtheorem{lem}[thm]{Lemma}
\Crefname{lem}{Lemma}{Lemmas}
\newtheorem*{claim*}{Claim}
\newtheorem{claim}{Claim}[section]
\crefname{claim}{Claim}{Claims}
\Crefname{claim}{Claim}{Claims}
\newtheorem{prop}[thm]{Proposition}
\Crefname{prop}{Proposition}{Propositions}
\Crefname{remar}{Remark}{Remarks}
\crefname{cor}{Corollary}{Corollaries}
\newtheorem*{conj*}{Conjecture}
\newtheorem{conj}[thm]{Conjecture}
\crefname{conj}{Conjecture}{Conjectures}
\Crefname{qn}{Question}{Questions}
\Crefname{obs}{Observation}{Observations}
\Crefname{ex}{Example}{Examples}
\theoremstyle{definition}
\Crefname{prob}{Problem}{Problems}
\newtheorem{defn}[thm]{Definition}
\Crefname{defn}{Definition}{Definitions}
\theoremstyle{remark}
\newtheorem*{rem}{Remark}
\newcommand{\remove}[1]{}
\title{\vspace{-1 cm}
On Independent Spanning Trees in Random and Pseudorandom Graphs}
\crefname{enumi}{property}{parts}
\newcommand{\whp}{\textbf{whp}}
\author{%
  Nemanja Dragani\'c\thanks{Mathematical Institute, University of Oxford, UK. 
    Research supported by SNSF project 217926. 
    Email: \texttt{nemanja.draganic@maths.ox.ac.uk}}%
  \and
  Keith Frankston\thanks{Center for Communications Research – Princeton, USA.  
    Email: \texttt{k.frankston@fastmail.com}}%
  \and
  Michael Krivelevich\thanks{School of Mathematical Sciences, Tel Aviv University, Tel Aviv 6997801, Israel. Research supported in part by NSF-BSF grant 2023688. Email: \texttt{krivelev@tauex.tau.ac.il}}%
  \and
  Alexey Pokrovskiy\thanks{Department of Mathematics, University College London, UK.  
    Email: \texttt{dralexeypokrovskiy@gmail.com}}%
  \and
  Liana Yepremyan\thanks{Department of Mathematics, Emory University, Atlanta, USA.  
    Email: \texttt{liana.yepremyan@emory.edu}.  Research is supported by the National Science Foundation grant 2247013: Forbidden and Colored Subgraphs.}%
}
\begin{document} 

\maketitle
\begin{abstract}
In 1989, Zehavi and Itai conjectured that every $k$-connected graph contains $k$ independent spanning trees rooted at any prescribed vertex $r$. That is, for each vertex $v$, the unique $r$–$v$ paths within these $k$ spanning trees are internally disjoint. This fundamental problem has received much attention, in part motivated by its applications to network reliability, but despite that has only been resolved for $k \le 4$ and certain restricted graph families.

We establish the conjecture for almost all graphs of essentially any relevant density. Specifically, we prove that there exists a constant $C > 1$ such that, with high probability, the random graph $G(n,p)$ contains $\delta(G)$ independent spanning trees rooted at any vertex whenever $C \log n/n \leq p < 0.99$. Since the lower bound on $p$ coincides (up to the constant $C$) with the connectivity threshold of $G(n,p)$, this result is essentially optimal. In addition, we show that $(n,d,\lambda)$-graphs with fairly mild bounds on the spectral ratio $d/\lambda$ contain $(1-o(1))d$ independent spanning trees rooted at each vertex, thereby settling the conjecture asymptotically for random $d$-regular graphs as well.
\end{abstract}

% \begin{abstract}In 1989 Itai and Zehavi conjectured that every $k$-vertex-connected graph   contains a collection of $k$ many spanning tres  $\{T_i\}_{i=1}^k$ all rooted at a given vertex $r$ such that the paths $P_i(v)$ from $r$ to $v$ in trees $T_i$ are pairwise internally disjoint. In this paper we show that the conjecture is true for binomial random graph $G(n,p)$ past the well-known threshold for connectivity. In particular, we show that  there exists some constant $C>1$ such that for all $C\log{n} /n \leq p <0.99$, $G\sim G(n,p)$ contains such a collection of $\delta(G)$ many trees with high probability. %We also show that the conjecture holds asymptotically for regular pseudorandom graphs, which implies that random $d$-regular graphs contain $(1-o(1))d$ such trees.
%\end{abstract}
%\maketitle
% \newpage
\section{Introduction}

%From cloud data centres that must stay online during switch failures, to wireless sensor networks where links flicker with interference, modern communication networks are expected to \emph{maintain connectivity under failure}. %Practical fault models range from benign (single-link outages) to adversarial (targeted node removals or eavesdropping), yet the design principle is uniform: embed \emph{redundant and efficiently exploitable structure} so that traffic can be rerouted quickly, the network is resilient to the failure of a fixed number of paths, and critical services remain uninterrupted.
 %Among the most extensively studied combinatorial tools in this context are \emph{independent spanning trees} (ISTs) --- collections of spanning trees that intersect as little as possible along critical paths. %Their ability to simultaneously offer path diversity, fast recovery guarantees, and certificates of connectivity has made them a natural object of study in algorithmic network design. 

% Such diversity, ranging from biology to large-scale computing, underscores the importance of ISTs as a unifying tool for resilient multipath connectivity.

Spanning trees $T_1,\dots,T_k$ (with $k\ge2$) in a graph $G$, rooted at a vertex $r$, are said to be \emph{ISTs}, short for \emph{independent spanning trees}, if, for every vertex $v\in V(G)$, the unique $r$–$v$ paths in the $T_i$'s are internally vertex-disjoint; that is, they share no vertices other than $r$ and~$v$.  These are also commonly known as vertex- or node-independent spanning trees\footnote{Edge-independent trees are also widely studied in the literature and are related to edge-connectivity instead (see~\cite{cheng2023independent} for more details)}. Independent spanning trees surface in an impressively wide range of settings: in computational biology, hypercube ISTs help to detect anomalies in mitochondrial DNA~\cite{silva2016}; in high-performance computing, ISTs provide fault-tolerant broadcast and low-diameter routes for multidimensional torus interconnections~\cite{tang2010}; carrier-grade IP networks leverage three edge-ISTs for fast rerouting under dual-link failures~\cite{gopalan2016}; and emerging server-centric datacentre fabrics employ so-called completely independent trees to preserve high throughput despite switch outages~\cite{li2021completely}.  We refer the interested reader to an extensive survey~\cite{cheng2023independent} by Cheng, Wang and Fan on the broad applications of ISTs in various fields ranging from biology to large-scale computing.

Probably the most famous conjecture in the area was made by Zehavi and Itai in the late '80s~\cite{zehavi1989three}, essentially at the conception of the notion of ISTs. It is widely referred to as the \emph{Independent Spanning Trees Conjecture}. To state it, we recall that an $n$-vertex graph $G$ is $k$-connected if deleting any $k-1<n$ vertices does not disconnect the graph; the largest such $k$ is the (vertex-) connectivity of $G$, denoted by $\kappa (G)$. 

\begin{conj}[Zehavi-Itai, '89]\label{conj:main} Every graph $G$ contains $\kappa(G)$ many ISTs rooted at $r$, for every choice of $r\in V(G)$.\footnote{For convenience, we will sometimes simply say the Zehavi-Itai conjecture holds when we mean $G$ has this property.}
\end{conj}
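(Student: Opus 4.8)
I would attack \Cref{conj:main} by induction on $k=\kappa(G)$, taking the cases $k\le 4$ from the theorems of Zehavi--Itai and Curran--Lee--Yu as the base. The first thing to get straight is why the obvious induction fails. Given a $k$-connected $G$ and a root $r$, one is tempted to pass to a minimally $k$-connected spanning subgraph, pick a vertex $v\ne r$ of degree exactly $k$ (these exist, by Halin's theorem on minimally $k$-connected graphs), observe that $G-v$ is automatically $(k-1)$-connected, apply the hypothesis to get $k-1$ ISTs of $G-v$ rooted at $r$, and re-attach $v$ along its $k$ edges --- one edge extending each of the $k-1$ old trees, the last edge seeding a $k$-th tree that must span $G-v$. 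The obstruction is that this $k$-th tree has to be internally disjoint from the other $k-1$ at \emph{every} vertex of $G-v$, i.e.\ one needs $k$ ISTs of $G-v$, one more IST than the inductive hypothesis delivers for the $(k-1)$-connected graph $G-v$. Hence any workable induction must build $G$ by moves that never shed the relevant $k$-connectivity structure.

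So the plan is to use a \emph{rooted $k$-ear decomposition}: a chain $G_0\subseteq G_1\subseteq\cdots\subseteq G_m=G$ where $G_0$ is a small, explicitly IST-able gadget through $r$ (a $(k+1)$-clique or a suitable wheel), and each $G_i=G_{i-1}\cup P_i$ adjoins an ear $P_i$ --- a path or a single edge, both endpoints in $G_{i-1}$, interior vertices new --- chosen to preserve a connectivity/non-separation invariant. Alongside it I would carry $k$ \emph{partial ISTs}: a colouring of parent-pointers giving every vertex of $G_i$ a $k$-fan to $r$, the $k$ fans pairwise internally disjoint. The inductive step is then a local \emph{ear-absorption lemma}: given the $k$ partial ISTs of $G_{i-1}$ and an ear $P_i$ from $x$ to $y$, fold the interior of $P_i$ into the existing colour classes --- exploiting that $x$ and $y$ already carry internally disjoint $k$-fans --- so the new vertices also acquire disjoint $k$-fans, after rerouting only a bounded region. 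For $k=2$ this is exactly the open-ear-decomposition / $st$-numbering argument; for $k=3$ it is the non-separating ear decomposition of Cheriyan--Maheshwari; for $k=4$ it is the chain / $4$-block machinery behind Curran--Lee--Yu.

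The main obstacle --- and, I suspect, essentially the entire content of the conjecture for $k\ge 5$ --- is producing such a rooted $k$-ear decomposition with a non-separation guarantee strong enough to feed the absorption lemma. No decomposition of this kind is known for general $k$-connected graphs; already for $k=3,4$ the underlying structure theorems are delicate, and there is no candidate uniform statement for larger $k$. Even granted such a decomposition, the absorption lemma is a second, genuine obstacle: the number of local configurations around the endpoints of an ear grows with $k$, and the case analyses that suffice for $k\le 4$ do not obviously scale, so one needs a uniform rerouting argument --- presumably an exchange / augmenting-path mechanism acting on the $k$ colour classes.

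As sanity checks and possible stepping stones I would first (i) re-derive $k\le 4$ in this language, pinning down the exact invariant and absorption lemma; (ii) attempt $k=5$ by pushing the $4$-block approach one level up, hoping the shape of the general argument becomes visible; and (iii) look for an alternative formulation --- as a matroid-intersection or common-basis problem on the two forest structures, or as a topological statement about the complex of partial IST systems --- while keeping in mind that ``internally disjoint at every vertex'' is not a matroid constraint and seems to lack a tight fractional relaxation, which is exactly why the naive relaxations have not resolved the problem.
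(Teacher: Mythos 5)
This statement is an open conjecture, and the paper does not prove it: the authors only establish it for binomial random graphs above the connectivity threshold (\Cref{thm:main}) and asymptotically for $(n,d,\lambda)$-graphs (\Cref{thm:ndlambda result}). Your proposal, by its own admission, is likewise not a proof. The two steps that would constitute the actual argument --- the existence of a rooted $k$-ear decomposition with a non-separation guarantee for general $k$-connected graphs, and a uniform ear-absorption lemma for rerouting $k$ colour classes --- are both left as acknowledged open problems, and together they are essentially the entire content of the conjecture for $k\ge 5$. What you have written is a correct diagnosis of why the naive vertex-deletion induction fails and a reasonable summary of the ear-decomposition philosophy behind the known cases $k\le 4$, but it contains no new idea that would close either gap. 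So there is nothing here to verify against the paper: the paper makes no claim to prove the general conjecture, and neither, in substance, do you.

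For what it is worth, the paper's actual method (in the random and pseudorandom settings) is structurally quite different from your program. Rather than growing $G$ by ears and maintaining $k$ partial fans, the authors build a \emph{nice collection} of small pairwise-disjoint trees through the root (\Cref{def:nice collection}) --- in the dense case just the edges $rv_i$, in the sparse and pseudorandom cases short disjoint paths --- and then show that every remaining vertex can be attached as a leaf while reaching each tree it does not already neighbour via a two-edge path through a distinct intermediate vertex lying outside all the trees (\Cref{lem:nice trees are sufficient}). This reduces independence to a local matching condition, verified by Hall-type arguments and expansion. That reduction leans heavily on randomness/pseudorandomness (most vertices have degree well above $\delta(G)$, and bipartite matchings exist with overwhelming probability), so it does not obviously transfer to arbitrary $k$-connected graphs; but it is worth noting as a genuinely different paradigm from ear decomposition, one in which the trees are built globally first and the hard work is in attaching the remaining vertices, rather than in growing the graph edge by edge.
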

This conjecture is also of theoretical importance due to being a qualitative strengthening of Menger's Theorem~\cite{Menger}. Indeed, given any pair of vertices $u,v$, if we have $k$ independent spanning trees rooted at $u$, then the paths in these trees from $u$ to $v$ give $k$ internally vertex-disjoint paths as guaranteed by Menger's Theorem.

This problem received a lot of attention, and has been studied in numerous papers and for various classes of graphs. Before the conjecture was formally stated in 1989, Itai and Rodeh~\cite{itai1988multi} showed that the conjecture is true for $k=2$. Soon after, Cheriyan and Maheshwari~\cite{cheriyan1988} showed that the conjecture holds for $k=3$. Zehavi and Itai~\cite{zehavi1989three} independently arrived at the same result, and stated Conjecture~\ref{conj:main}~\cite{zehavi1989three} in the same work.  Much later, Curran, Lee, and Yu~\cite{Curran2006} settled the $k=4$ case.
Further results have been established for graphs with particular underlying topology, such as planar graphs~\cite{miura1999} or cube-like graphs~\cite{yang2007hypercube,werapun2012hypercube}.

% and furthermore, they can be found in $O(n)$ time

Regarding general bounds, Censor-Hillel, Ghaffari, Giakkoupis, Haeupler and Kuhn~\cite{censor2017tight} showed that $k$-connected $n$-vertex graphs contain $\Omega(k/\log ^2 n)$ many ISTs via connected dominating sets (CDSs). 
%A \emph{connected dominating set} $S$ in a graph $G$ is a subset of vertices spanning a connected subgraph such that every vertex outside of $S$ has a neighbour in $S$. 
Observe that one can naturally construct $k$ ISTs from $k$ disjoint connected dominating sets by selecting a spanning tree within each dominating set and attaching the remaining vertices of the graph as leaves. However, the CDS-based approach has inherent limitations: Censor-Hillel, Ghaffari, and Kuhn~\cite{HGK} proved that there exist $k$-vertex-connected $n$-vertex graphs containing only $O(k/\log n)$ many disjoint CDSs, so the best bound obtainable by this approach falls short of the conjectured bound by a logarithmic factor in $n$.

Nevertheless, for $d$-regular pseudorandom graphs, Draganić and Krivelevich~\cite{draganic2025disjoint} employed CDSs to show that such graphs contain $(1-o(1))d/\log d$ ISTs, thereby removing the dependence on $n$ for this class. In particular, this implies that random $d$-regular graphs contain $(1+o(1))d/\log d$ disjoint CDSs; their method yields the same bound for binomial random graphs $G(n,d/n)$ when $d\gg \log n$. These bounds are asymptotically best possible via the CDS approach, since in $G(n,d/n)$ the smallest dominating set typically has size $(1+o(1))n\log d/d$, thus there cannot be more than $(1+o(1))d/\log d$ disjoint CDSs. They further conjectured that random graphs above the connectivity threshold satisfy, at least approximately, the IST Conjecture. Our first main result establishes this conjecture and essentially shows that the Zehavi-Itai Conjecture typically holds for random graphs above the connectivity threshold:

%In this work we essentially resolve the Zehavi-Itai conjecture for random graphs. In particular, this means that given a parameter $d=d(n)$, almost every $n$-vertex graph with average degree close to $d$ contains $\kappa(G)$ many ISTs, provided $d\geq C\log n$ for a large enough absolute constant $C$. It is known that if $d$ is slightly smaller than $\log n$, a random graph at that density typically is not even connected, so it does not contain any spanning trees.

\begin{thm}\label{thm:main}
    There exists $C>1$ such that, for any $C\log n/n \leq p \leq 0.99$, if $G\sim G(n,p),$  then, \whp{}, for every vertex $r$, there are $\delta(G)$ many ISTs rooted at $r$.
\end{thm}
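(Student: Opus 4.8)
The plan is to build the required $k:=\delta(G)$ trees by an adaptive, mostly vertex-by-vertex construction that trades on the strong expansion of $G\sim G(n,p)$, and to handle the few near-minimum-degree vertices --- where no local flexibility exists --- by a reserved, absorption-type argument. First I would record the structural facts about $G$ that hold with probability $1-n^{-\omega(1)}$ (so that a union bound over the root $r$ is affordable): that $\kappa(G)=\delta(G)=k$ (so the statement is really the Zehavi--Itai conjecture for $G$); that $G$ is a strong vertex-expander of diameter $O(\log n/\log(np))$, robustly so in the sense that deleting any $o(n)$ vertices leaves it highly connected; and that the set $W$ of ``low-degree'' vertices --- those within a suitable margin of the minimum degree $k$, the margin chosen so that $W$ stays small --- is ``spread out'': an independent set, with no two of its vertices sharing a neighbour inside $W$, and with few $W$-incident edges meeting any small set. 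It is probably cleanest to run the dense range ($np\ge\log^{2}n$, say), where $W$ is negligible and root-paths can be taken of length $O(1)$, separately from the delicate near-threshold range $np=\Theta(\log n)$.

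Next I would reformulate the goal: a system of $k$ ISTs rooted at $r$ is the same as a choice of parent maps $\pi_{1},\dots,\pi_{k}\colon V\setminus\{r\}\to V$ such that each $\pi_{i}$ is acyclic with every maximal directed path ending at a neighbour of $r$ (so its functional digraph is a spanning tree rooted at $r$), and such that for every $v\ne r$ and every $i\ne j$ the $\pi_{i}$- and $\pi_{j}$-ancestor paths of $v$ meet only in $\{r,v\}$; equivalently, one wants for each $v$ a coherent choice of $k$ internally disjoint $r$--$v$ paths that, colour by colour, glue into $k$ fixed trees. The heart of the argument is then an adaptive construction on the bulk $V\setminus W$: fixing an ordering $r=v_{1},v_{2},\dots$ from the expansion structure (a small core around $r$ handled directly from $k$-connectivity and Menger's theorem, after which each $v_{t}$ has at least $k$ already-processed neighbours), I would add the $v_{t}$ one at a time, maintaining partial ISTs $T_{1},\dots,T_{k}$ on $\{r\}\cup\{v_{1},\dots,v_{t-1}\}$ together with an invariant that preserves flexibility --- roughly, the $T_{i}$ stay shallow and no neighbourhood's induced ancestor structure is concentrated along a single root-path in any tree. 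Attaching $v_{t}$ amounts to picking distinct processed neighbours $x_{1},\dots,x_{k}$ and assigning $x_{i}$ to $T_{i}$ so that in $T_{i}$ no $x_{j}$ ($j\ne i$) is an ancestor of $x_{i}$: this is a system-of-distinct-representatives problem for the ``ancestor-free'' elements of the neighbourhood, which a Hall/deficiency argument settles as long as the neighbourhood is large and not pathologically arranged --- which is exactly what the invariant guarantees --- after which $v_{t}$ is joined at a point chosen to restore the invariant.

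The vertices of $W$ I would insert last, each as a leaf. For $v\in W$ with $N(v)=\{w_{1},\dots,w_{m}\}$, $m\ge k$ and possibly $m=k$ with zero local slack, one needs $k$ of the $w_{i}$ assignable to the $k$ trees while respecting the ancestor condition. Because $W$ is spread out, the sets $N(v)\setminus W$ over $v\in W$ behave like essentially independent random ($\ge k$)-subsets of the bulk, and this is what lets one build the bulk trees ``absorber-aware'': reserve, and then spend, just enough ancestral genericity around each such $N(v)\setminus W$ --- a defect-form absorption --- so that the concluding leaf-insertions never get stuck. With the structural properties in hand the whole construction is deterministic, so a union bound over the $n$ choices of $r$ completes the proof.

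I expect the main obstacle to be precisely the coupling of the last two steps in the range $p=\Theta(\log n/n)$. There $k=\Theta(\log n)$ but the connectivity is barely $k$; the trees cannot all be near-shortest-path trees, since a vertex may have a unique neighbour one step closer to $r$ and so the $k$ colours must fan out laterally; and a degree-$k$ vertex offers no slack at all --- all $k$ of its neighbours must be used --- so the $k$ trees have to be pre-configured, simultaneously for every such vertex, with exactly the ancestral structure that makes the final insertions succeed, all while remaining $k$-connected and shallow. Reconciling ``enough global flexibility to absorb'' with ``barely enough connectivity to route'' in this regime is the technical core; the dense range $np=\omega(\log n)$ should be substantially easier, with $W$ essentially empty and root-paths of length $O(1)$.
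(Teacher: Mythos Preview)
Your local attachment rule is not strong enough to preserve independence. When you attach $v_t$ to $x_i$ in $T_i$ and to $x_j$ in $T_j$, the two $r$--$v_t$ paths are internally disjoint exactly when the full ancestor set of $x_i$ in $T_i$ meets the full ancestor set of $x_j$ in $T_j$ only in $r$. Your stated condition---that no $x_\ell$ with $\ell\ne i$ is a $T_i$-ancestor of $x_i$---is only necessary: it rules out collisions at the $x_\ell$'s themselves but not at any other internal vertex. A two-tree example already breaks it: take $T_i$ containing the root-path $r$--$a$--$b$--$x_i$ and $T_j$ containing $r$--$a$--$c$--$x_j$, with the remaining parents arranged so that the partial trees are genuinely ISTs on the processed set (route $b,c,x_i,x_j$ to $r$ in the other tree avoiding $a$); then $x_i,x_j$ satisfy your rule yet both $r$--$v_t$ paths pass through $a$. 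Once the correct constraint is written down you are asking, for every pair $i\ne j$, that two entire root-paths be disjoint; this is no longer a system-of-distinct-representatives problem over local neighbourhood data, and there is no visible Hall-type certificate for it. The ``no concentration along a single root-path'' invariant you gesture at is precisely the hard thing to maintain under an adaptive build, and the absorption step for $W$ inherits the same difficulty with zero slack.

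The paper sidesteps this by never interleaving the trees. It first lays down $k=\delta(G)$ vertex-disjoint \emph{skeletons}---paths $P_1,\dots,P_k$ sharing only $r$---and then shows that every vertex $v$ can reach each $P_i$ it is not already adjacent to in two steps through a private intermediary $u_i\notin\bigcup_j V(P_j)$, the $u_i$ distinct over $i$. That is a genuine bipartite matching problem (paths versus a $\delta$-sized set of neighbours of $v$ outside the skeletons), which Hall handles cleanly. Because the skeletons are disjoint and every non-skeleton vertex ends up as a leaf or the parent of a leaf, ancestor sets in different trees are automatically disjoint and no running invariant is required. In the dense range the skeletons are single edges $rv_i$; in the sparse range they are paths of length $\lceil\Theta(\log n/(np^{2}))\rceil$, calibrated so that the auxiliary bipartite graph between paths and intermediaries has expected degree $\Theta(\log n)$ and sits above its Hall threshold. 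The low-degree set $S$ is handled by routing the skeletons outside $S\cup N(S)$ (via a three-round exposure $G=G_1\cup G_2\cup G_3$) and letting each $v\in S$ use its whole neighbourhood as the intermediary pool---no absorption machinery is needed.
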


\noindent As a classical result of Bollob\'as and Thomasson~\cite{bollobas1985random} shows that typically $\kappa(G)=\delta(G)$ for any $p=p(n)$, this implies that the Zehavi-Itai conjecture holds for binomial random graphs in the relevant range. % (note that if $pn$ is less than $\log n$, such graphs are typically not even connected and hence don't contain any spanning trees)
 We note that we prove our result only for $p\leq 0.99$ for simplicity of presentation, and a slightly more careful argument would extend this to $p=1-o(1)$. For $p\ll \log n/ n$ the random graph $G(n,p)$ is typically not connected, so the Zehavi-Itai conjecture holds trivially. 
 
Very recently, independent from our work, an approximate version of Theorem~\ref{thm:main} was established by Hollom, Lichev, Mond, Portier, and Wang~\cite{HLMPW}, where, instead of finding  $\delta(G)$ many ISTs, they showed that for every $\varepsilon>0$ there exists $C=C(\varepsilon)$ such that, if $p\geq C\log{n}/n$ and $G\sim G(n,p),$ then, \whp{}, for every vertex $r\in V(G)$, there exist $(1-\varepsilon)\delta(G)$ such trees rooted at $r$ in $G$.

We note that attaining the exact bound, as opposed to an asymptotic solution, requires a considerably more delicate argument. To give some intuition, in a collection of ISTs, every vertex not adjacent to the root must have at least $\delta=\delta(G)$ incident edges belonging to distinct trees. Consequently, for vertices whose degree is close to $\delta$ in $G(n,d/n)$, one must carefully plan how their edges are distributed among the trees. On the other hand, if one aims only to construct $(1-\varepsilon)d$ ISTs, these complications are far less severe, and our proof can be considerably shortened in this case.

%\noindent As a classical result of Bollob\'as and Thomasson~\cite{bollobas1985random} shows that typically $\kappa(G)=\delta(G)$ for any $p=p(n)$, our theorem indeed proves the Zehavi-Itai conjecture for binomial random graphs in the relevant range. Let us note that we proved our result only for $p\leq 0.99$ for simplicity of presentation, and a slightly more careful argument extends this for $p$ up to $1-o(1)$.

Our second main result resolves the Zehavi-Itai conjecture asymptotically for pseudorandom graphs, in particular, for $(n,d,\lambda)$-graphs.

\begin{defn}
An \(n\)-vertex graph \(G\) whose adjacency matrix has eigenvalues \(\lambda_1 \geq \lambda_2 \geq \dots \geq \lambda_n\) is called an \((n,d,\lambda)\)-\emph{graph} if \(G\) is \(d\)-regular and \(\max\{\lambda_2, |\lambda_n|\} \leq \lambda\).
\end{defn}

For readers less familiar with this notion, we refer to~\Cref{sec:ndlambda}, in particular~\Cref{lem:EML} (the Expander Mixing Lemma), which provides a concrete pseudorandomness corollary sufficient for our purposes.
These graphs are known for their strong connectivity and expansion properties, making them well-suited for applications such as fault-tolerant network design~\cite{hoory2006expander}. Despite being defined deterministically, they closely mimic the behavior of random graphs in many key respects. The reader is invited to consult comprehensive survey~\cite{krivelevich2006pseudo} for a thorough discussion of properties of $(n,d,\lambda)$-graphs.

In~\Cref{sec:ndlambda}, we establish the following asymptotic result for $(n,d, \lambda)$-graphs.
\begin{restatable}{thm}{NDLambdaResult}\label{thm:ndlambda result}
    Fix $\varepsilon>0$, and let $G$ be an $(n,d,\lambda)$-graph with $d/\lambda=\omega( \log d)$ and $d=o(n/\log^2 n)$. Then $G$ contains $(1-\varepsilon)d$ vertex-independent spanning trees for each choice of root.
\end{restatable}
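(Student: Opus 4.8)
The plan is to reduce the statement to building certain spanning forests and then to construct these by a two-scale argument. Fix the root $r$ and write $k:=(1-\varepsilon)d$. Since in a spanning tree rooted at $r$ the $r$--$v$ path is unique, a family of $k$ ISTs rooted at $r$ is exactly a collection of $k$ spanning trees $T_1,\dots,T_k$ of $G$, each rooted at $r$, that are \emph{ancestor-disjoint}, meaning that for every $v$ and all $i\neq j$ the $r$--$v$ paths in $T_i$ and $T_j$ meet only in $\{r,v\}$. First I would delete $r$: declaring every vertex of $N_G(r)$ to be a child of $r$ in each $T_i$, the problem becomes to find $k$ spanning forests $\Phi_1,\dots,\Phi_k$ of $H:=G-r$, each having exactly one vertex of $N_G(r)$ in every component, such that, writing $A_i(v)$ for the set of vertices other than $v$ on the path from $v$ to the root of its $\Phi_i$-component, one has $A_i(v)\cap A_j(v)=\es$ for all $v\in V(H)$ and $i\neq j$ (this is precisely the IST property transported to $H$). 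Note $\delta(H)\geq d-1\geq k$, so every vertex has enough neighbours to carry the $k$ distinct parent-edges that ancestor-disjointness forces, which is necessary.

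I would run BFS from $r$ in $G$, obtaining layers $L_0=\{r\}$, $L_1=N_G(r)$, $L_2,\dots,L_D$. By the Expander Mixing Lemma (\Cref{lem:EML}) together with $d/\lambda=\omega(\log d)$, the initial segments $L_{\le t}$ grow by a factor $\Omega((d/\lambda)^2)$ while small, so $D=O(\log n/\log(d/\lambda))$. Choosing $t_0$ maximal with $|L_{\le t_0}|\leq n/\log^{3}n$ and setting $W:=L_{\le t_0}$ (the \emph{core}, of size $o(n)$) and $Z:=V\setminus W$ (the \emph{bulk}, of size $(1-o(1))n$, whose only edges into $W$ land in $L_{t_0}$), the construction splits into: (a) build the ``root ends'' of the $k$ forests inside $W$, ending at a large spread-out set of \emph{gateways} in $L_{t_0}$, the gateway set of each forest disjoint from the others; and (b) grow each forest from its gateways through $Z$ so as to span $Z$.

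Step (b) I expect to be the easier one. As $|Z|=(1-o(1))n$, the graph $G[Z]$ is a strong expander, and since $d=o(n/\log^2 n)$, i.e.\ $n/d=\omega(\log^2 n)$, one can process $Z$ in $O(\log n)$ BFS rounds from the gateways: at each round, to every newly-reached vertex $v$ one assigns $k$ parents --- $k$ distinct neighbours of $v$ lying in the previous round --- while keeping the strict ancestor sets of $v$ across the $k$ forests disjoint. The expansion leaves $\omega(\log n)$ essentially free choices for each parent at each step, so a Chernoff-and-union-bound (or a plain greedy) argument rules out all collisions; the slack in $k=(1-\varepsilon)d$, which lets a constant fraction of each vertex's edges be reserved, is exactly what supplies this room, and it also delivers the required $k$ distinct parent-edges at every bulk vertex.

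The heart of the matter --- and the step I expect to be the main obstacle --- is step (a), the core. The subgraph $G[W]$ has only $o(n)$ vertices and $O(\log n/\log(d/\lambda))$ BFS layers, a low-layer vertex $v\in L_t$ may have as few as one neighbour in $L_{t-1}$, and $G[L_t]$ is typically extremely sparse, so ancestor chains cannot simply descend layer by layer: they must be allowed to detour into higher layers and back, exploiting that each core vertex still has $k$ internally-disjoint paths to $N_G(r)$ (because $G$ is $((1-o(1))d)$-connected). I would carry this out by a global, inductive routing argument: formulate a strengthened statement for a graph equipped with a well-expanding set $R$ of allowed component-roots, peel off the outer (hence the bulk of the) vertices, recurse on the small inner remainder to root the forests at $R$ there, and glue the outer part back on by the expansion-driven step of (b); the recursion bottoms out after $O(\log n/\log(d/\lambda))$ steps, in the extreme on $G[N_G(r)]$ itself, where the forests are trivial since each vertex is its own component-root. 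The delicate points are (i) checking that each inner remainder, a BFS-ball in an expander, retains a strong enough expansion profile --- and that $R=N_G(r)$ still expands well inside it --- for the recursive and greedy steps to apply, and (ii) bookkeeping the gateway sets so that, after gluing the inner and outer portions of each forest, the whole family stays ancestor-disjoint. Concatenating (a) and (b) then produces the $k=(1-\varepsilon)d$ ancestor-disjoint spanning forests of $H$, hence $k$ ISTs rooted at the arbitrary vertex $r$.
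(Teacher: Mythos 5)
The reduction at the start is sound: finding $k$ ISTs rooted at $r$ is indeed the same as finding $k$ ancestor-disjoint spanning forests of $G-r$, and you are right that every vertex other than $r$ must carry $k$ distinct parent-edges. The problem is that the construction you describe does not produce such forests, and the part you flag as ``the heart of the matter'' is not solved.

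The main gap is that a BFS-layered assignment of parents cannot work in this sparsity regime, and this breaks step (b) (not just step (a)). When you process a newly-reached vertex $v$ in $Z$ and try to assign it $k=(1-\varepsilon)d$ distinct parents ``lying in the previous round,'' you need $v$ to have $k$ neighbours in a thin slice of the graph. But $v$ has only $d$ neighbours total, spread pseudorandomly over the whole vertex set, so the number of neighbours of $v$ in a set of size $s$ is roughly $ds/n$; until the set of already-processed vertices has size $(1-\varepsilon-o(1))n$, this is strictly less than $k$. The gateway set has size $n/\log^3 n$, so for several rounds of bulk BFS every newly reached vertex is missing almost all of its $k$ required parents. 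This is not a bookkeeping issue that greedy or Chernoff arguments fix; the $k$ parent-edges of a vertex simply cannot all point ``backwards'' in a BFS order. The paper's construction is engineered precisely to avoid this: the trees $T_i$ each consist of a small deep core $S_i$ (built once, disjointly, via the Friedman--Pippenger extendability machinery of Lemma~\ref{lem:connecting S_i}), with almost every vertex attached to $T_i$ at depth one or two from $S_i$. Then a vertex's $k$ parents point to the $k$ different cores $S_i$ (or, via a fresh connector $u_i$, to $N(S_i)$), which are spread across the graph rather than concentrated in one BFS layer. Securing these connectors and parents for all vertices simultaneously is handled not by a greedy BFS but by a random partition $U\cup R\cup S_1\cup\dots\cup S_k$ and the asymmetric Lov\'asz Local Lemma (Claim~\ref{cl:good partition}); the ``nice collection'' abstraction (Definition~\ref{def:nice collection} and Lemma~\ref{lem:nice trees are sufficient}) is what certifies that the resulting trees are internally disjoint. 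Your step (a), by contrast, is a proposal for a recursive routing argument whose inductive statement, gluing step, and base case are not specified; since you yourself identify this as the main obstacle, the proposal as written does not constitute a proof.
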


Random $d$-regular graphs are typically $(n,d,\lambda)$-graphs with spectral ratio $d/\lambda=\Theta(\sqrt{d})$ (see~\cite{sarid2023spectral}), hence our results imply that such random graphs with $d=o(n/\log^2 n)$ satisfy the Zehavi-Itai conjecture asymptotically as well. For $d=\omega(\log n)$,  $G_1\sim G(n,d)$ can be coupled with $G_2\sim G(n,p)$ with $p=(1-o(1))d/n$ such that whp $G_2\subseteq G_1$ (see~\cite{gao2020sandwiching} and its bibliography). Thus \Cref{thm:main} implies that the conjecture holds asymptotically for all larger $d$ as well.

\begin{thm}\label{thm:random regular}
    Fix $\varepsilon>0$. Let $G$ be a random $d$-regular graph for $d\geq d_0(\varepsilon)$. For every vertex $r \in V(G)$, there exist $(1 - \varepsilon)d$ edge-disjoint independent spanning trees rooted at $r$.
\end{thm}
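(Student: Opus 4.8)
The plan is to read this theorem off from our two main results, \Cref{thm:main} and \Cref{thm:ndlambda result}, by passing through standard couplings between the random regular and binomial random graph models. Throughout, ``\whp'' refers to the uniform random $d$-regular model $G(n,d)$ as $n\to\infty$, and the conclusion is understood to hold \whp. We also read the conclusion as the existence of $(1-\varepsilon)d$ (vertex-)independent spanning trees rooted at $r$, in line with \Cref{thm:main} and \Cref{thm:ndlambda result}: a $d$-regular graph has only $dn/2$ edges, hence at most about $d/2$ pairwise edge-disjoint spanning trees, so ``edge-disjoint'' cannot literally be asked of $(1-\varepsilon)d$ trees when $\varepsilon<1/2$. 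Fix $\varepsilon>0$, let $d_0=d_0(\varepsilon)$ be a large constant, and assume $d_0\le d\le 0.99n$; the extremely dense range $d>0.99n$ (where $G$ is nearly complete) is handled by an analogous or ad hoc argument, e.g.\ via a coupling with $G(n,0.98)$, and we suppress it. I would split into two regimes according to whether $d\le\sqrt n$ or $d>\sqrt n$.

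\emph{Small degrees: $d\le\sqrt n$.} Here I would invoke the standard fact that $G\sim G(n,d)$ is \whp{} an $(n,d,\lambda)$-graph with $\lambda=O(\sqrt d)$ — Friedman's second-eigenvalue theorem together with its extensions to growing $d$; see~\cite{sarid2023spectral} — so that $d/\lambda=\Omega(\sqrt d)$, which for $d\ge d_0$ (with $d_0=d_0(\varepsilon)$ large enough) comfortably exceeds any fixed multiple of $\log d$, meeting the hypothesis $d/\lambda=\omega(\log d)$ of \Cref{thm:ndlambda result}. Since also $d\le\sqrt n=o(n/\log^2 n)$, that theorem applies and yields $(1-\varepsilon)d$ independent spanning trees rooted at any prescribed vertex.

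\emph{Large degrees: $\sqrt n< d\le 0.99n$.} Now $d=\omega(\log n)$, so by the sandwiching theorem for random regular graphs (\cite{gao2020sandwiching} and its bibliography) there is a coupling of $G\sim G(n,d)$ with $H\sim G(n,p)$, $p=(1-o(1))d/n$, under which \whp{} $H\subseteq G$. Since $np=\omega(\log n)$, \whp{} $\delta(H)=(1-o(1))np=(1-o(1))d$, which is at least $(1-\varepsilon)d$ for $n$ large; and $C\log n/n\le p\le 0.99$ for $n$ large, so \Cref{thm:main} applies to $H$ and produces $\delta(H)\ge(1-\varepsilon)d$ independent spanning trees of $H$ rooted at $r$. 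These transfer verbatim to $G$: a spanning tree of $H$ is a spanning tree of $G$ (same vertex set), and for each vertex $v$ the unique $r$--$v$ path inside a given tree is determined by that tree alone, hence unaffected by enlarging the ambient graph, so the internal vertex-disjointness of the family is preserved in $G$.

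Finally, one checks that the two regimes are exhaustive on $d_0\le d\le 0.99n$, which holds since $\sqrt n=\omega(\log n)$ and $\sqrt n=o(n/\log^2 n)$. That is the whole argument: there is essentially no further obstacle here, as all the real work is carried by \Cref{thm:main} and \Cref{thm:ndlambda result}; the only points needing any care are the spectral estimate $\lambda=O(\sqrt d)$ for random regular graphs in the small-degree regime and the subgraph-monotonicity of the property of being a family of independent spanning trees in the large-degree regime, both of which are routine.
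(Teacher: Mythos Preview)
Your argument is correct and follows the same two-regime strategy the paper uses: for small $d$ you invoke \Cref{thm:ndlambda result} via the spectral gap $\lambda=O(\sqrt d)$ of random regular graphs, and for large $d$ you sandwich $G(n,d)$ over $G(n,p)$ and apply \Cref{thm:main}. Your observation that ``edge-disjoint'' in the statement must be read as (vertex-)independent is apt, and your care about the monotonicity of the IST property under passing to supergraphs is exactly what is needed; the paper's derivation is terser but identical in substance.
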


Hollom et al.~\cite{HLMPW} also established the existence of $(1-\varepsilon)d$ ISTs for random regular graphs $G(n,d)$, but their result requires $d\gg \log n$, whereas we achieve it for $d=\Omega(1)$. In the same work, they further showed that $\lfloor d/4 \rfloor$ ISTs exist for all $d$, with the caveat that the root $r$ is not arbitrary but can be chosen from most of the vertices of $G$.

%\section{OLD Introduction}

%\liana{by Gregor, Škrekovski and Vukašnovic~\cite{ADD}.  Decide if we want to add this GSV results...}

%In this paper we show that such a collection of ISTs exist \whp{} for the Erd\H{o}s--R\'{e}nyi random graph $G(n,p)$, where $p=p(n)$, and for almost the entire range of $p$ for which $G$ is known to connected \whp{} by results of Bollob\'as and Thomasson~\cite{bollobasthomason}.

%In fact, our proof yields a stronger result. Namely, every vertex can be chosen as a root. We will discuss this further in the concluding remarks.
%\textbf{Remark.} Very recently, while our manuscript was in preparation, Hollom, Lichev, Mond, Portier and Wang~\cite{HLMPW} posted an asymptotic solution of the Zehavi-Itai conjecture for random graphs $G(n,p)$ with $p\gg \log{n}/n$, and for random regular graphs $G(n,d)$ with $d\gg \log{n}$. They also show the existence of $\lfloor d/4\rfloor$ ISTs for all $d$. Note that our result provides an \emph{exact} solution of  the Zehavi-Itai conjecture for random graphs $G(n,p)$ with $p=\Omega(\log{n}/n)$\cmtkf{Unless we wanted to try and change the upper bound to $p\le 1$, we can't say this}, i.e., for an essentially optimal range of $p(n)$. Furthermore, \Cref{thm:random regular} confirms the conjecture asymptotically for $G(n,d)$ for every sufficiently large $d$.

\section{Preliminaries}

\subsection{{Notation}}
Throughout this paper we adopt the usual graph‐theoretic conventions. For a graph \(G\), we write $V(G)$ for its vertex set and $E(G)$ for its edge set. For $S,B\subseteq V(G)$, $G[S]$ denotes the subgraph of $G$ induced by $S$, $N_G(S)$ denotes the \emph{external} neighbourhood of $S$, while $\Gamma_B(S)$ denotes all vertices in $B$ adjacent to at least one vertex in $S$. Whenever it is clear from the context, we drop the subscript. Given two subsets $S_1,S_2\subseteq V(G)$, $e_G(S_1,S_2)$ counts the edges with one endpoint in each. We write $d_G(v,S)$ to denote the number of neighbours of vertex $v$ in the set $S$. 
We write $\delta(G)$ and $\Delta(G)$ for the minimum and maximum degree of $G$, respectively. Let $G(n,p)$ denote the binomial random graph on $n$ vertices in which each of the $\binom n2$ possible edges appears independently with probability $p$.
We say that an event $\mathcal{E}$ holds \emph{with high probability} (abbreviated \whp{}) if $\Pr(\mathcal{E})\to1$ as $n\to\infty$. All logarithms are natural (base $e$) unless explicitly indicated otherwise.
%A random variable $X$ follows a hypergeometric distribution, denoted as $X \sim \text{Hyp}(N, K, m)$, where $N$ is the total population size, $K$ is the number of successes in the population, and $m$ is the number of items sampled without replacement.

\subsection{Properties of random graphs}

%Before we collect some useful properties that hold in $G(n,p)$ \whp{}, we state Chernoff's inequality.
We first state Chernoff's inequality, before we proceed to collect some useful properties that hold for $G(n, p)$ \whp{}:

\begin{thm}[Chernoff bounds]
Let $X\sim B(n,p)$ be a binomial random variable with $n$ trials and success probability $p$. Then the following hold.
\begin{itemize}
    \item $P[X>(1+\delta)np ]\leq e^{-\delta^2np/(2+\delta)}$ for $\delta>0$.
    \item $P[X<(1-\delta)np]\leq e^{-\delta^2 np/2}$ for $0<\delta<1$.
\end{itemize}

\end{thm}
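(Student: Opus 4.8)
The plan is to use the classical exponential-moment (``Chernoff'') method. Write $X=\sum_{i=1}^{n}X_i$, where $X_1,\dots,X_n$ are i.i.d.\ Bernoulli random variables of mean $p$, and set $\mu=\mathbb{E}X=np$. For the upper tail, fix a parameter $t>0$. Since $x\mapsto e^{tx}$ is nonnegative and increasing, Markov's inequality gives
\[
P[X>(1+\delta)\mu]\;\le\; P[e^{tX}\ge e^{t(1+\delta)\mu}]\;\le\; e^{-t(1+\delta)\mu}\,\mathbb{E}[e^{tX}].
\]
By independence, $\mathbb{E}[e^{tX}]=\prod_{i=1}^{n}\mathbb{E}[e^{tX_i}]=(1-p+pe^{t})^{n}$, and the elementary inequality $1+y\le e^{y}$ applied with $y=p(e^{t}-1)$ yields $\mathbb{E}[e^{tX}]\le e^{\mu(e^{t}-1)}$. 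Hence $P[X>(1+\delta)\mu]\le\exp\!\big(\mu(e^{t}-1-t(1+\delta))\big)$. The exponent is convex in $t$ and is minimized at $e^{t}=1+\delta$, i.e.\ at $t=\ln(1+\delta)>0$; substituting that value gives
\[
P[X>(1+\delta)\mu]\;\le\;\left(\frac{e^{\delta}}{(1+\delta)^{1+\delta}}\right)^{\!\mu}=\exp\!\big(-\mu\,[(1+\delta)\ln(1+\delta)-\delta]\big).
\]

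To deduce the first bound it then remains to check the one-variable inequality $(1+\delta)\ln(1+\delta)-\delta\ge \delta^{2}/(2+\delta)$ for all $\delta>0$: writing $h(\delta)$ for the difference of the two sides, one has $h(0)=h'(0)=0$ and $h''\ge 0$ on $(0,\infty)$ (the last point reducing to the polynomial inequality $(\delta+2)^{3}\ge 8(1+\delta)$), so $h\ge 0$. The lower tail is symmetric. For $t>0$, Markov applied to $e^{-tX}$, with the same factorization and the bound $1+y\le e^{y}$, gives $P[X<(1-\delta)\mu]\le\exp\!\big(\mu(e^{-t}-1+t(1-\delta))\big)$, and the optimal choice $t=-\ln(1-\delta)>0$ (valid since $0<\delta<1$) yields
\[
P[X<(1-\delta)\mu]\;\le\;\left(\frac{e^{-\delta}}{(1-\delta)^{1-\delta}}\right)^{\!\mu}=\exp\!\big(-\mu\,[\delta+(1-\delta)\ln(1-\delta)]\big).
\]
Here it suffices to verify $\delta+(1-\delta)\ln(1-\delta)\ge \delta^{2}/2$ for $0<\delta<1$; setting $g(\delta)$ equal to the difference, one has $g(0)=0$ and $g'(\delta)=-\ln(1-\delta)-\delta\ge 0$ (using $\ln(1-\delta)\le-\delta$), so $g\ge 0$. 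Recalling $\mu=np$, the two displays become exactly the claimed bounds.

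The main---indeed the only---difficulty is bookkeeping: carrying out the two optimizations over $t$ and then confirming the two elementary real-analysis inequalities that convert the sharp ``$\exp$'' forms into the stated ones. There is no conceptual obstacle, as tail bounds of precisely this shape are entirely standard; the write-up amounts to organizing the steps above cleanly, and one could equally cite any standard reference for the statement as given.
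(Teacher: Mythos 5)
Your derivation is correct: the exponential-moment bound, the optimization at $t=\ln(1+\delta)$ (resp.\ $t=-\ln(1-\delta)$), and the two elementary inequalities $(1+\delta)\ln(1+\delta)-\delta\ge\delta^{2}/(2+\delta)$ and $\delta+(1-\delta)\ln(1-\delta)\ge\delta^{2}/2$ all check out. The paper states these Chernoff bounds as a standard black-box result without proof, so there is nothing to compare against; yours is the standard textbook argument and could equally be replaced by a citation.
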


The next lemma states some typical properties of $G(n,p)$ that we will use in our proofs. Various expressions in the lemma are tailored for our needs and are not optimized.
\begin{lem}\label{lem: properties of gnp}
    Given sufficiently large $C > 1$, for any $C\log n/n \leq p \leq 0.99$, let $G\sim G(n,p)$. Denote by $S$ the set of vertices of degree less than $np-0.9\sqrt{2np\log n}$. Then, with the stated probabilities,
$G$ satisfies the following properties:
     \begin{enumerate}[label=(\alph*)]
    \item \label{itm:mindeg} \textbf{Min and max degree:} we have \whp{}
    $\delta(G)= np-(1\pm\varepsilon)\sqrt{2np(1-p)\log n}$, where $\varepsilon\rightarrow 0$ as $C\rightarrow \infty$; furthermore $\Delta(G)\leq 2pn$ with probability at least $1-o(n^{-10})$.
    
    \item \label{itm:lowdegset} \textbf{Low-degree set:}
    For $p=o(1)$, $|S|\leq n^{0.2}$ with probability $1-o(n^{-3})$. 
    
    \item \label{itm:sparseboundary} \textbf{Sparse boundary:}
    For $p\leq n^{-0.49}$, every vertex outside of $S$ has at most $\sqrt{np}$ neighbours in $S\cup N(S)$ with probability at least $1-o(n^{-3})$.
    
    \item \label{itm:commonneighbors} \textbf{Common neighbors:}
    For $p\leq n^{-0.49}$, every pair of vertices $u,v$ has at most $\sqrt{np}$ common neighbours with probability at least $1-o(n^{-3})$.
    \item\label{p:independence}\textbf{Independence of low-degree set:} For $p\leq n^{-0.49}$ the set $S$ is independent \whp{}.
    
    \item \label{itm:connectivity} \textbf{Connectivity:}
    $G$ is $\delta(G)$-connected \whp{}.
\end{enumerate}

\end{lem}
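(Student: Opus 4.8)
The plan is to verify the six properties (a)--(f) one at a time; each is an instance of the first/second moment method together with Chernoff-type tail bounds, and only (b) calls for real care with constants. For (a), the degree of a fixed vertex is $\mathrm{Bin}(n-1,p)$, so Chernoff with $\delta=1$ gives $\Pr[\deg_G v>2pn]\le e^{-np/3}\le n^{-C/3}$, and a union bound over the $n$ vertices makes $\{\Delta(G)\le 2pn\}$ fail with probability $o(n^{-10})$ once $C$ is large. The claimed value of $\delta(G)$ is the classical sharp concentration of the minimum degree of $G(n,p)$: the lower bound $\delta(G)\ge np-(1+\varepsilon)\sqrt{2np(1-p)\log n}$ comes from a union bound over vertices and a sharp binomial lower-tail estimate (the cubic correction in the exponent is $O(\log n/\sqrt C)$, hence harmless for $C$ large), while the matching upper bound comes from showing that the expected number of vertices of degree below $np-(1-\varepsilon)\sqrt{2np(1-p)\log n}$ tends to infinity and then invoking Chebyshev — the variance is $o$ of the mean squared since $\deg_G u$ and $\deg_G v$ are coupled only through the single potential edge $uv$. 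Choosing $\varepsilon=\varepsilon(C)\to 0$ slowly (e.g.\ $\varepsilon=C^{-1/4}$) keeps every exponent on the correct side of its threshold.

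\textbf{Properties (b) and (e).} Put $\theta:=np-0.9\sqrt{2np\log n}$, so $S=\{v:\deg_G v<\theta\}$. When $p=o(1)$ the standard deviation of $\deg_G v$ is $(1-o(1))\sqrt{np}$, so the deviation defining $S$ equals $(0.9-o(1))\sqrt{2\log n}$ standard deviations, and a sharp Chernoff bound gives $\Pr[v\in S]\le n^{-0.81+o(1)}$, whence $\mathbb{E}|S|\le n^{0.19+o(1)}$. For the tail set $m:=\lceil n^{0.2}\rceil$ and union bound over the $\binom nm$ candidate $m$-subsets $T$: if $T\subseteq S$ then $\deg_G(v,V\setminus T)<\theta$ for every $v\in T$, these $|T|$ events are independent (their edge supports are disjoint), and each has probability $n^{-0.81+o(1)}$ because deleting $T$ shifts the relevant mean by only $mp=o(\sqrt{np\log n})$, uniformly over the range of $p$. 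Hence
\[
\Pr[|S|\ge m]\;\le\;\binom nm\bigl(n^{-0.81+o(1)}\bigr)^{m}\;\le\;\bigl(en^{0.19+o(1)}/m\bigr)^{m}\;=\;\bigl(en^{-0.01+o(1)}\bigr)^{\lceil n^{0.2}\rceil},
\]
which is $e^{-\Omega(n^{0.2}\log n)}$, far below the claimed $o(n^{-3})$; we reuse this super-polynomial bound in (c). For (e): if $u,v\in S$ and $uv\in E(G)$, then conditioning on the edge $uv$ being present the punctured degrees $\deg_G(u,V\setminus\{v\})$ and $\deg_G(v,V\setminus\{u\})$ are independent and each below $\theta$, so $\Pr[u,v\in S,\ uv\in E]\le p\cdot n^{-1.62+o(1)}$; summing over pairs gives $p\,n^{0.38+o(1)}\le n^{-0.11+o(1)}=o(1)$ for $p\le n^{-0.49}$, so $S$ is independent \whp.

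\textbf{Properties (d) and (c).} For a fixed pair $u,v$ the number of common neighbours is $\mathrm{Bin}(n-2,p^2)$ with mean $\le np^2=o(\sqrt{np})$ (indeed $np^2/\sqrt{np}=\sqrt n\,p^{3/2}\le n^{-0.235}$ for $p\le n^{-0.49}$), so the Poisson-type tail $\Pr[\mathrm{Bin}(M,q)>t]\le (eMq/t)^t$ with $t=\sqrt{np}$ gives probability $n^{-\Omega(\sqrt{np})}=e^{-\Omega((\log n)^{3/2})}$, and summing over the $\binom n2$ pairs proves (d). For (c), fix $u$ and condition on all edges of $G$ not incident to $u$; this determines $S^*:=\{w\ne u:\deg_{G-u}w<\theta\}$ and $B^*:=S^*\cup N_{G-u}(S^*)$, and by the analogue of (b) for $G(n-1,p)$ together with (a) we have $|S^*|\le n^{0.2}$ and $|B^*|\le 3n^{1.2}p$ except with super-polynomially small probability. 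A short case analysis shows that if $u\notin S$ then $(S\cup N_G(S))\setminus\{u\}\subseteq B^*$: a vertex of $S$ other than $u$ already has degree $<\theta$ in $G-u$, and a vertex adjacent to some $s\in S$ is adjacent to $s$ via an edge not incident to $u$ (since $u\notin S$ forces $s\ne u$) with $s\in S^*$. Consequently $d_G(u,S\cup N_G(S))\le |N_G(u)\cap B^*|$; as $B^*$ is measurable with respect to the conditioning while $N_G(u)$ depends only on the edges at $u$, conditionally $|N_G(u)\cap B^*|\sim\mathrm{Bin}(|B^*|,p)$ with mean $\le 3n^{1.2}p^2=o(\sqrt{np})$, so the same Poisson-type bound gives $\Pr[|N_G(u)\cap B^*|>\sqrt{np}]=n^{-\Omega(\sqrt{np})}$, and a union bound over $u$ completes (c).

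\textbf{Property (f)} is exactly the classical theorem of Bollob\'as and Thomason~\cite{bollobas1985random} that $\kappa(G)=\delta(G)$ \whp{} for all $p=p(n)$. The only delicate point in the whole lemma is (b): the exponent $0.81$ produced by the constant $0.9$ only barely beats the exponent $0.8$ produced by the target size $n^{0.2}$, so one must keep the $o(1)$ corrections honestly $o(1)$ — in particular track, uniformly in $p$, that $mp=o(\sqrt{np\log n})$ — rather than absorbing them into any fixed loss; everything else is routine.
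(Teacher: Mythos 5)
Your proposal is correct and follows essentially the same strategy as the paper: Chernoff bounds for the single-vertex tail in (a), (b); conditioning on $G-v$ and a binomial/Poisson tail for the conditional degree into $S'\cup N(S')$ in (c); the $\mathrm{Bin}(n-2,p^2)$ tail for (d); a two-point independence argument for (e); and citation of Bollob\'as--Thomason for (f). The only real deviations are cosmetic --- you handle (c) with a single Poisson-type tail bound where the paper splits into $np=\omega(\log^2 n)$ and $np=O(\log^2 n)$ subcases, and you spell out the second-moment sketch for (a) where the paper simply cites Bollob\'as's book --- and you correctly flag the one genuinely tight spot, namely that the exponent $0.81$ in (b) leaves only $0.01$ of room against the set size $n^{0.2}$, which is exactly where the paper also has to be careful (it uses $n^{-0.809}$ against $(en^{0.8})$).
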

\begin{proof}\hfill
\begin{enumerate}[label=(\alph*)]
    \item See Chapter 3 in~\cite{bollobas01random}.
    \item 
    Consider a fixed set $S$ of size $|n^{0.2}|$.
    Apply Chernoff bounds to get that the probability that a vertex has degree at most $np-0.9\sqrt{2np(1-p)\log n}$ outside of $S$ is at most $e^{-0.81(1-p)\log n}=n^{-0.81(1-p)}$.
    Note that these events are independent for all vertices in $S$. Hence the probability that all vertices in $S$ have small degree outside is at most $(n^{-0.81(1-p)})^{|S|}$. Thus, by a union bound, we get that the probability of such a set existing is at most
    $$
    \binom{n}{n^{0.2}}\cdot n^{-0.809n^{0.2}}\leq (en^{0.8})^{n^{0.2}}n^{-0.809n^{0.2}}\leq n^{-n^{0.2}/1000}\leq n^{-4}
    $$
    Thus with probability $1-o(n^{-3})$ there is no set $S$ which has small neighbourhood outside of $S$, and in particular does not have a small neighbourhood in general.
    
    % Hence the expected number of such vertices is $n^{0.19(1-p)}$. Hence by Markov's inequality, we have that $|S|<n^{0.2}$ \whp{}. It is easy to see that this holds with probability $1-o(n^{-3})$, (see, for example, proof of Proposition 3.8. in \cite{draganic2025optimal}).
    \item  Fix a vertex $v$. Expose all pairs in $G-v$. By \ref{itm:lowdegset}, we have that the set $S'$ of vertices of degree at most $np-0.9\sqrt{2np(1-p)\log n}$ in $G-v$ is at most $n^{0.21}$, and thus $|N(S')|\leq 2n^{1.21}p$ with probability at least $1-o(n^{-9})$, by the first part. Now expose the neighbours of $v$. The expected number of neighbours in $S'\cup N(S')$ is at most $pn^{1.21}\cdot p=n^{1.21}p^2=o(\sqrt{np})$. Thus $v$ has at most $o(\sqrt{np})$ neighbours in $S\cup N(S)$ with probability $1-o(n^{-3})$ in case $np=\omega(\log^2 n)$. Otherwise, the probability that $v$ has at least $100$ neighbours in $S'\cup N(S')$ is at most $|S'\cup N(S')|^{100}p^{100}\leq n^{-5}$. Hence we are done by a union bound over all vertices $v$ in both cases.
    % The expected common neighbourhood of every vertex pair is $np^2=o(\sqrt{np})$, so by Chernoff bounds and a union bound over all pairs, this holds for every pair whp. Thus whp every vertex has at most $|S|o(\sqrt{np}\log n)$
    \item  For a given pair of vertices $u, v$, the size of their common neighbourhood $N(u,v)$ is binomially distributed with parameters $(n-2, p^2)$. Therefore, the claim follows from Chernoff-type bounds for the binomial distribution and the union bound over all pairs of vertices.
    % \item The number of edges of $G(n,p)$ is by the first part between $np/2$ and $2np$, so by conditioning on the number of edges, it is enough to show that for a fixed $m$ in the interval $[np/2,2np]$ the claim holds for the random graph model $G(n,m)$. The probability that a 
    \item 
    Fix a pair $u,v\in V(G)$.
    The probability that $u$ (or $v$ respectively) has at most $np-0.9\sqrt{2np(1-p)\log n}$ neighbours in $V(G)-\{u,v\}$ is at most $n^{0.21}/n$ by \Cref{itm:lowdegset}. Thus the event that $uv$ is an edge in $S$ has probability at most $(n^{-0.79})^2p=o(n^{-2})$. By a union bound over all pairs of edges, there is no edge in $S$ \whp{}.
    
    % Conditioned on the event that the pair $uv$ is an edge in $G(n,p)$, the probability that $u$ is in $S$ is by part \Cref{itm:lowdegset} at most $n^{0.2}/n$, and the same holds for $v$. Since these events are independent, the probability that both are in $S$ is at most $n^{-1.6}$. Thus the probability that $uv$ is an edge in $S$ is at most $pn^{-1.6}$. By a union bound, the probability that such an edge exists is at most $n^2pn^{-1.6}\rightarrow 0$, since $p\leq n^{-0.49}$.
    % \item This is a classical result from~\cite{bollobas1985random}.
\end{enumerate}
    
\end{proof}

% \begin{thm}[Erd\H{o}s-Renyi, \cite{erdHos1964random}]\label{Thm_matching}
% Let $G_{n,n,p}$ be a random bipartite graph with parts of size $n$ and each edge between them present independently with probability $p\geq \frac{\log n+f(n)}n$. Then  with probability $\ge 1- 2e^{2e^{-f(n)}}$, the graph $G_{n,n,p}$ has a perfect matching.
% \end{thm}
% We remark that the original statement of the result proved by Erd\H{o}s and Renyi in~\cite{erdHos1964random} does not let $f$ depend on $n$ --- however by looking through the proof one easily sees it gives this statement (see also Theorem 6.2 from~\cite{frieze2015introduction} for a version which has a dependence on $n$). When we'll apply the above theorem, we'll always have $p\geq 5 \log n/n$  and we'll only use that it guarantees a perfect matching with probability $1-o(n^{-1})$. We'll also be using the theorem applied to unbalanced bipartite graphs $G_{n_1, n_2, p}$ where with parts of sizes $n_1, n_2$. Here, note by deleting $|n_1-n_2|$ vertices from the larger side and then applying \Cref{Thm_matching}, we get that $G_{n_1, n_2, p}$ contains a matching covering all vertices in the smaller part of the bipartite graph. \alexey{I added this matching lemma before noticing that there's now a direct proof of the matching result in the dense case.  I'll keep it here for now just in case we decide it's convenient for the sparse case. Leaving a note here to delete this theorem if we don't end up using it!}

    A balanced random bipartite graph typically contains a perfect matching when above the connectivity threshold. Below is a proof with a bound on the success probability.
\begin{lem}\label{lem:matching}
    Let $G\sim G(n,n,p)$ be a binomial random bipartite graph with parts of size $n$, and edge probability $p\geq 40\log n/n$. Then $G$ has a perfect matching with probability at least $1-n^{-\frac{pn}{2\log n}+4}\geq   1-o(n^{-10})$.
\end{lem}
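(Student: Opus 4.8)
The plan is the classical matching-via-Hall argument, followed by a union bound over the obstructions. Write $A$ and $B$ for the two sides of $G$, each of size $n$. If $G$ has no perfect matching then, by Hall's theorem, there is a set $S\subseteq A$ with $|N_G(S)|\le|S|-1$; setting $T:=B\setminus N_G(S)$ one gets $|S|+|T|\ge n+1$ and $e_G(S,T)=0$. Hence, writing $a=|S|$ and $b=|T|$,
\[
\Pr[G\text{ has no perfect matching}]\ \le\ \sum_{\substack{1\le a,b\le n\\ a+b\ge n+1}}\binom{n}{a}\binom{n}{b}(1-p)^{ab},
\]
and the whole problem reduces to estimating this sum.

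To bound the sum I would split according to $m:=\min(a,b)$. If $m\ge n/3$ then $a,b\ge n/3$, so $ab\ge n^2/9$ and each such term is at most $4^n(1-p)^{n^2/9}\le 4^ne^{-pn^2/9}$; since $pn\ge 40\log n$ this is smaller than $n^{-3n}$, and summing over the at most $n^2$ pairs of this type contributes a completely negligible amount. If $m<n/3$, say $a=m$ (the case $b=m$ being symmetric), then $b\ge n+1-a\ge 2n/3$, so $\binom{n}{a}\binom{n}{b}=\binom{n}{a}\binom{n}{n-b}\le n^{2a}$ because $n-b\le a-1$, while $(1-p)^{ab}\le(1-p)^{2an/3}\le e^{-2pan/3}$. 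Summing over the at most $a$ admissible values of $b$ and then over $a\ge1$,
\[
\sum_{a\ge1}n^{2a+1}e^{-2pan/3}\ \le\ \sum_{a\ge1}e^{-apn/2}\ \le\ 2e^{-pn/2},
\]
where the first inequality uses $pn\ge 40\log n$ (which forces $(2a+1)\log n\le apn/6$ for every $a\ge1$) and the second uses $e^{-pn/2}\le 1/2$. Doubling for the symmetric case and absorbing the negligible $m\ge n/3$ part, I obtain $\Pr[\text{no perfect matching}]\le 5e^{-pn/2}=5\,n^{-pn/(2\log n)}\le n^{-pn/(2\log n)+4}$.

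The final claim $1-o(n^{-10})$ then follows at once, since $p\ge 40\log n/n$ gives $pn/(2\log n)\ge 20$, so the bound is at most $n^{-16}$. I do not anticipate any real obstacle here: the only thing to watch is pinning down the constants in the two ranges of $\min(a,b)$, and the estimate is entirely driven by the $a=1$ term — a single vertex on one side with no neighbour in a large set on the other — which is precisely where the factor $n^{-pn/(2\log n)}$ originates.
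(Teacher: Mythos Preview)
Your proof is correct and follows essentially the same approach as the paper: Hall's theorem plus a union bound over pairs $(S,T)$ with $e(S,T)=0$, estimated so that the $a=1$ term dominates. The only cosmetic difference is that the paper first reduces to Hall witnesses of size at most $n/2$ on either side, obtaining the single sum $\sum_{k\le n/2}\binom{n}{k}^2(1-p)^{k(n-k)}$ and bounding it as a geometric series, whereas you keep both sizes $(a,b)$ and split by $\min(a,b)$; the arithmetic and the final bound are the same.
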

\begin{proof}
    Denote the parts by $A,B$. If $G[A,B]$ has no perfect matching, then  by Hall's theorem there is a set $A_0\subset A$ of cardinality $|A_0|\le n/2$ with $|N(A_0,B)|< |A_0|$, or a set $B_0\subset B $ of cardinality $|B_0|\le n/2$ with $|N(B_0,A)|< |B_0|$. The probability of this can be bounded from above by \[\sum_{k=1}^{n/2} \binom{n}{k}^2(1-p)^{k(n-k)}\le \sum_{k=1}^{n/2} \left(\frac{en}{k}\right)^{2k}e^{-pkn/2}\leq 
     \sum_{k=1}^{n/2} \left(3n\right)^{2k}n^{-pkn/(2\log n)}\leq n\left(3n\right)^{2}n^{-pn/(2\log n)},\]
which implies the required bound as $n\rightarrow\infty$.
\end{proof}

\begin{lem}\label{lem:existenceofroots}
    Let $G\sim G(n,p)$ where $p\geq 10^5\log n /n$, let $k \leq n^{0.51}/2$, and let $Q$ be a subset of $V(G)$ of cardinality $k$. Then with probability at least $1-o(n^{-1})$ there are $k$ many vertex disjoint paths each of length $n/50k$ such that every vertex of $Q$ is an endpoint of one of these paths. 
\end{lem}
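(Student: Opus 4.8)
The plan is to build the $k$ paths greedily, one at a time, and to analyse the procedure by deferred exposure of the edges of $G$. Write $\ell:=\lfloor n/(50k)\rfloor$ (whether ``length'' is counted in edges or vertices is immaterial here). We construct paths $P_1,\dots,P_k$ in turn: $P_i$ starts at $q_i$ and is extended one vertex at a time until it has the required length. At any point during the construction of $P_i$, call a vertex \emph{available} if it lies on none of $P_1,\dots,P_{i-1}$, is not one of the remaining roots $q_{i+1},\dots,q_k$, and is not yet on $P_i$; since we only ever append available vertices, each $P_i$ stays a simple path. To extend the current endpoint $v$ of $P_i$, we query the pairs $\{v,u\}$ with $u$ available, one by one, until we find an edge of $G$, and append that vertex; if no such edge exists the procedure aborts.

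For the analysis, note first that the set of unavailable vertices at any moment has size at most $k\ell+2k\le n/50+n^{0.51}<n/10$, so the available set $A$ always has $|A|\ge 9n/10$. The crucial observation is that at the moment we extend $v$, none of the pairs $\{v,u\}$ with $u\in A$ has been queried before: such a pair is queried only while extending $v$ (which cannot have happened earlier, since $v$ became the current endpoint only at the previous step and before that was itself available) or while extending $u$ (which never happened, as an available vertex has never been the endpoint of any path). Hence, conditioned on everything revealed so far, the pairs $\{v,u\}$ with $u\in A$ are independent and each is an edge of $G$ with probability $p$, so the extension step fails with probability $(1-p)^{|A|}\le e^{-0.9np}\le n^{-9\cdot 10^4}$, using $p\ge 10^5\log n/n$. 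Since there are at most $\sum_{i=1}^k\ell\le n/50<n$ extension steps in total, summing these conditional failure probabilities against the event of reaching each step shows the procedure succeeds with probability at least $1-n\cdot n^{-9\cdot 10^4}=1-o(n^{-1})$. On success we obtain $k$ pairwise vertex-disjoint paths of the prescribed length with $q_i$ an endpoint of $P_i$.

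The main thing to get right is precisely the ``freshness'' bookkeeping in the step above: a direct union bound over all possible sets of already-used vertices fails, since there are roughly $2^n$ such sets of size up to $n/50$, which dwarfs the per-configuration failure probability $e^{-\Theta(np)}$. Deferred exposure sidesteps this because the used set is allowed to depend on the revealed edges, whereas the adjacency of the current endpoint to the currently available vertices is left untouched; I would therefore phrase the argument as a stopping-time union bound, conditioning on the filtration generated by the queries, to handle the adaptivity cleanly. Beyond Chernoff-type estimates nothing from the earlier lemmas is needed, and the hypothesis $k\le n^{0.51}/2$ is used only in the form $k=o(n)$, to keep $|A|$ large.
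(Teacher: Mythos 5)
Your proof is correct, and it takes a genuinely different route from the paper. The paper extends all $k$ paths in lockstep, one vertex per round for $\ell=n/(50k)$ rounds: in each round it exposes the bipartite graph between the current set of endpoints and the unused vertices and invokes the perfect-matching lemma (\Cref{lem:matching}) to advance every path simultaneously, then union-bounds the $o(n^{-2})$ per-round failure probability over the $\ell$ rounds. You instead grow the paths one at a time by a greedy, one-vertex-at-a-time extension, analysed via deferred exposure; your "freshness" argument — that the pairs $\{v,u\}$ with $u$ currently available have never been queried, because $u$ was never an endpoint and $v$ only just became one — is exactly the point that makes the adaptive union bound over the $\le n$ extension steps legitimate, and it is argued correctly (the only slight imprecision is the parenthetical "before that was itself available," which is false when $v$ is a root $q_i$, but the conclusion that $v$ was never previously an endpoint still holds). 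Your approach is more elementary and self-contained, avoiding the matching lemma entirely, and in fact yields the much sharper bound $1-n^{-\Theta(10^5)}$; the paper's round-by-round matching approach is arguably cleaner probabilistically because each round exposes a fresh bipartite graph between disjoint sets, sidestepping the bookkeeping of which slots have been revealed, and it reuses a lemma that is needed elsewhere in the paper anyway. Both establish the claim with the stated probability.
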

%Liana, Michael: removed $pn\leq k$ bound, don't think it is needed.
\begin{proof}
    %By \Cref{prop: simple gnp properties} we have that \whp{} $G$ is $m$-joined for $m=n/100$, and that the graph $I[R]$ (the independent set on vertex set $R$) is $(m, 10)$-extendable.

  %  Thus we can use \Cref{thm:FP} to find $|R|$ disjoint binary trees of size $\frac{10^4\log n }{np^2}$ rooted at different vertices in $R$. This is possible as the total size of the trees is $\frac{10^4\log n}{p}\leq n/100\leq |G|-2\cdot 3\cdot m-3m
   % $.
   We expose the random edges of $G(n,p)$ in bunches. Start with $Q_0=Q$ and expose all the edges between $Q_0$ and $V(G)\setminus Q_0$, call the graph induced by these edges $G_1$. Notice that $|V(G)\setminus Q_0|\geq n- n^{0.51}/2$, thus by Lemma~\ref{lem:matching} (by adding dummy vertices to $Q_0$ to be of the same size as $V(G)\setminus Q_0$) with probability $1-o(n^{-2})$ we can find a matching $M_1$ saturating $Q_0$ and going into $V(G)\setminus Q_0$ in $G_1\subset G$. We now let $Q_1$ be the endpoints of $M_1$ disjoint from $Q_0$. Expose the edges between $Q_1$ and $V(G)\setminus (Q_0\cup Q_1)$, and call the graph induced by these edges $G_2\subset G$. Just like in the previous step, with probability $1-o(n^{-2})$  there exists a matching $M_2$ between $Q_1$ and $V(G)\setminus (Q_0\cup Q_1)$ in $G_2$. Repeat this process  $\ell=n/50k$  times. At the $j$-th step, we will still have $|V(G)\setminus \cup_{i=1}^{j-1}{Q_i}|\geq 49n/50$, thus again with probability $1-o(n^{-2})$ there is a matching $M_{j}$ saturating $Q_{j-1}$ and going into $V(G)\setminus \cup_{i=1}^{j-1}{Q_i}$ in the subgraph $G_{j}\subset G$. Taking a union bound over all $\ell$ rounds, we get that with probability $1-o(n^{-1})$ the union of these matchings  $\{M_i\}_{i=1}^{\ell}$ gives us the desired collection of paths.
    \end{proof}
We will use the following randomized version of the previous lemma:
\begin{lem}\label{lem:existenceofroots_random}
    Let $G\sim G(n,p)$ where $p\geq 10^6\log n /n$, let $k \leq n^{0.51}$, $\ell\le n/100k$. For any $Q\subseteq W\subseteq V(G)$ with $|Q|=k$, $|W|\ge n/2$, we can define a randomized set $P\subseteq V(G)$ with the following properties:
    \begin{itemize}
        \item With probability at least $1-o(n^{-1})$ there are $k$ many vertex disjoint paths each of length $\ell$ such that every vertex of $Q$ is an endpoint of one of these paths. 
        \item The distribution of $P\setminus Q$ is that of a uniformly chosen random subset of $W$ of order $k\ell$.
    \end{itemize}
\end{lem}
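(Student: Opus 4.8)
The plan is to deduce the lemma from \Cref{lem:existenceofroots} by conjugating its output with a uniformly random relabeling, exploiting the invariance of the binomial random graph under vertex permutations to turn the (a~priori biased) vertex set of the paths into a uniformly random set. One cosmetic remark first: since $P\setminus Q$ is disjoint from $Q$, I would read the second bullet as asking that $P\setminus Q$ be a uniformly random $k\ell$-subset of $W\setminus Q$ (the difference from the stated formulation is immaterial, as $|Q|=k=o(|W|)$). Observe that $G[W]\sim G(|W|,p)$ with $|W|\ge n/2$, and that the hypotheses $p\ge 10^6\log n/n$, $k\le n^{0.51}$, $\ell\le n/(100k)$ imply, after harmless adjustments of constants (using $|W|\ge n/2$), that \Cref{lem:existenceofroots} applies to $G[W]$: with probability $1-o(n^{-1})$ it yields $k$ vertex-disjoint $Q$-rooted paths of length $|W|/(50k)\ge \ell$ inside $W$, and truncating each to its first $\ell$ edges from the $Q$-endpoint keeps this a valid family of the required length.

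Concretely, let $\pi$ be a uniformly random permutation of $W\setminus Q$ (extended to $W$ by fixing $Q$ pointwise), chosen independently of $G$, and set $H:=\pi\bigl(G[W]\bigr)$. Let $f$ be the deterministic map that, given a graph on vertex set $W$, returns the lexicographically first family of $k$ vertex-disjoint $Q$-rooted paths of length $\ell$ contained in it, and returns a fixed default family (say the one whose vertex set is $Q$ together with the first $k\ell$ vertices of $W\setminus Q$) if none exists; write $\mathcal P':=f(H)$ and $P':=V(\mathcal P')$, so that $P'\setminus Q$ is always a $k\ell$-subset of $W\setminus Q$. Finally define $\mathcal P:=\pi^{-1}(\mathcal P')$ and $P:=V(\mathcal P)=Q\cup\pi^{-1}(P'\setminus Q)$.

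Both requirements then follow from permutation-invariance. Since relabeling preserves $G(|W|,p)$ and $\pi$ is independent of $G[W]$, one checks that $H\sim G(|W|,p)$ and, moreover, that $H$ is independent of $\pi$. Hence, by \Cref{lem:existenceofroots} applied to $H$ together with the truncation step, with probability $1-o(n^{-1})$ the family $\mathcal P'$ genuinely consists of $k$ vertex-disjoint $Q$-rooted paths of length $\ell$ in $H$; on that event $\pi^{-1}(\mathcal P')$ is such a family in $\pi^{-1}(H)=G[W]\subseteq G$ (as $\pi^{-1}$ carries edges of $H$ to edges of $G$ and fixes $Q$), which gives the first bullet. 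For the second, since the event $\{P'\setminus Q=T\}$ depends only on $H$ and $\pi$ is independent of $H$, conditioning on this event leaves $\pi$ uniform, so $P\setminus Q=\pi^{-1}(T)$ is a uniformly random $k\ell$-subset of $W\setminus Q$ for each fixed $T$, hence unconditionally. The only genuinely delicate point — and the one the whole argument is organized around — is exactly this: \Cref{lem:existenceofroots} by itself produces paths whose vertex set depends on the edges of $G$ in an uncontrolled way, and conjugating the construction by a uniformly random relabeling of $W\setminus Q$ is what launders that dependence into the uniform distribution while leaving the paths and their prescribed roots intact; the remaining steps (transferring the hypotheses of \Cref{lem:existenceofroots} to $G[W]$, and checking validity under truncation) are routine.
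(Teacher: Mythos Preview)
Your proof is correct and follows essentially the same approach as the paper: conjugate the output of \Cref{lem:existenceofroots} by a uniformly random permutation of $W$ fixing $Q$, use the permutation-invariance of $G(|W|,p)$ to argue that the permuted graph is again binomial and independent of the permutation, and pull back to obtain both the paths in $G$ and the uniformity of $P\setminus Q$. Your write-up is in fact somewhat more careful than the paper's (you explicitly isolate the independence of $H$ and $\pi$, handle the default case via a deterministic $f$, and flag the $W$ versus $W\setminus Q$ cosmetic issue), but the underlying idea is identical.
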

\begin{proof}
Let $\sigma$ be a  random permutation of $W$ chosen uniformly at random from all such permutations that fix $Q$. Note that $\sigma^{-1}$ also has the same distribution. Let $G'=\sigma(G[W])$, noting that for distinct $x,y\in W$, $xy$ is an edge of $G'$ independently with probability $p$ i.e. $G'$ has the distribution of another Erd\H{o}s-Renyi random graph on vertex set $W$. By Lemma~\ref{lem:existenceofroots}, with probability $1-o(n^{-1})$, $G'$ contains a set  $k$ many vertex disjoint paths each of length $n/100k$ such that every vertex of $Q$ is an endpoint of one of these paths. When this occurs, shorten these to have length $\ell$, and let   $P'$ be the vertex set of these paths noting $|P'|=k\ell$. In outcomes when this doesn't occur, let $P'$ be an arbitrary subset of $W$ of order $k\ell$.  Let $P=\sigma^{-1}(P')$. Since $\sigma^{-1}$ is a uniformly chosen random permutation fixing $Q$, every subset of $W$ of order $k\ell$ is equally likely to end up as $P$.
\end{proof}

\section{Proof of Theorem~\ref{thm:main}}

% \begin{defn}
%     Given a graph $G$ a collection $\mathscr T=\{T_i\}_{i\in [t]}$ is nice if the following hold:
% \begin{itemize}
%     \item For each $i$, there exists $T_i'\subseteq T_i$ such that each such tree is rooted at the same vertex $r$, and otherwise they are vertex-disjoint. We call each $T_i$ \emph{the base} of the tree $T_i$.
%     \item For every $v\in V(G)$, let $I(v)=\{i: v\notin T_i'\}$. Then for all $i\in I(v)$ there exist distinct $u_i\notin \bigcup_{i\in [t]} T_i'$
%      and (not necessarily distinct) $w_i\in T_i'$ such that $vu_iw_i$ is a two-edge path in $T_i$.
% \end{itemize}
% \end{defn}
We start by defining the notion of a \emph{nice} collection of trees, which will be useful for building ISTs:

\begin{defn}\label{def:nice collection}
    Given a graph $G$, let $\mathscr S=\{S_i\}_{i\in [t]}$ be a collection of trees rooted at the same vertex $r$, but pairwise disjoint otherwise.
    For every $v\in V(G)$, let $I(v)=\{i: v\notin S_i\cup N(S_i)\}$ be the set of indices of trees which do not contain $v$ or any of its neighbours. The collection $\mathscr S$ is \emph{nice} if, for all $v\in V(G)$ we can, for each $i\in I(v)$, find $u_i\notin \bigcup_{j\in [t]} S_j$
     and $w_i\in S_i$ such that the $u_i$ are all distinct and $v$--$u_i$--$w_i$ is a two-edge path in $G$.
\end{defn}
% It might be nice to rephrase in terms of the auxiliary graph and matching language here and in the proof of the lemma below. Not important right now though
The next result shows how to construct ISTs from a nice collection of trees.
\begin{lem}\label{lem:nice trees are sufficient} If a graph $G$ contains a nice collection of trees $\mathscr T=\{S_i\}_{i\in [t]}$ then it contains a collection of ISTs, $\{T_i\}_{i\in [t]}$, such that $S_i\subseteq T_i$ for each $i$.
\end{lem}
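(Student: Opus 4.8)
The plan is to take the given nice collection $\mathscr T=\{S_i\}_{i\in[t]}$ and extend each tree $S_i$ to a spanning tree $T_i$ by attaching every vertex not already in $S_i$ via a single edge, in such a way that the collection $\{T_i\}$ remains independent (i.e.\ the $r$--$v$ paths are internally disjoint). The key structural observation is that, since the $S_i$ are pairwise disjoint apart from $r$, any vertex $v\neq r$ lies in at most one of them; for the trees $S_i$ not containing $v$ we must decide where to hang $v$. If $v$ has a neighbour in $S_i$, we may simply attach $v$ to any such neighbour as a leaf — this adds no new internal vertices to the $r$--$v$ path in $T_i$, so it cannot create a conflict with the other trees. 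The genuinely delicate case is a tree $S_i$ with $i\in I(v)$, i.e.\ one that contains neither $v$ nor any neighbour of $v$: here $v$ cannot be a leaf of $T_i$, and we must route it through at least one extra vertex.

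This is exactly what niceness buys us. For each $v$ and each $i\in I(v)$ we are handed a private vertex $u_i\notin\bigcup_j S_j$ and a vertex $w_i\in S_i$ with $v$--$u_i$--$w_i$ a path in $G$; the $u_i$ are distinct across $i\in I(v)$. The construction is then: first build, for every such "auxiliary" vertex $u$ (those not in any $S_j$), a global assignment placing $u$ as a leaf of each $T_i$ — attach $u$ to some fixed neighbour of $u$ inside $S_i$ if one exists, and otherwise leave $u$ to be handled recursively; but in fact the simplest clean route is to process vertices in the order "first the vertices covered by some $S_i$ or with a neighbour in $S_i$, then the leftover auxiliary vertices". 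Concretely: in tree $T_i$, attach each vertex $x\notin S_i$ that has a neighbour in $S_i$ directly to such a neighbour; this handles all $x\notin V(S_i)$ with $i\notin I(x)$. The remaining vertices to place in $T_i$ are precisely those $x$ with $i\in I(x)$. For each such $x$, niceness gives $u\notin\bigcup_j S_j$ with $x$--$u$--$w$ a path, $w\in S_i$; attach $x$ to $u$, and attach $u$ to $w$ (if $u$ was not already placed in $T_i$ in the previous phase — and if it was, that earlier placement of $u$ was as a leaf whose parent lies in $S_i$, which is equally fine). Do this simultaneously over all $x$ with $i\in I(x)$, noting that the distinctness of the $u$'s (for a fixed $x$, across trees) together with $u\notin\bigcup S_j$ ensures $u$ is a leaf-or-near-leaf and introduces only itself as a new internal vertex on the $r$--$x$ path.

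It remains to verify two things: (1) each $T_i$ is genuinely a spanning tree, and (2) the family is independent. For (1), $S_i$ is connected and contains $r$, and every vertex outside $S_i$ is attached by exactly one edge to a vertex already in $T_i$ (either a neighbour in $S_i$, or to some $u$ which is itself attached to $S_i$), so no cycle is created and all of $V(G)$ is covered. For (2), fix a vertex $v\neq r$ and compare the $r$--$v$ paths $P_i$ in the $T_i$. The portion of $P_i$ inside $S_i$ uses only vertices of $S_i$, which are disjoint across $i$ except for $r$; so two such internal vertices in different trees can only coincide at $r$. The extra internal vertices of $P_i$ outside $S_i$ are: nothing, if $v$ was attached as a leaf to $S_i$; or the single vertex $u_i$, if $i\in I(v)$ — and $u_i\notin\bigcup_j S_j$ with the $u_i$ distinct over $i\in I(v)$, so these contribute no collision either with each other or with the in-tree segments. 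Hence the $P_i$ meet only at $r$ and $v$, which is the definition of ISTs.

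The main obstacle is the bookkeeping in the simultaneous attachment step: one must make sure that when $u=u_i(v)$ for some $(v,i)$ is also one of the $x$'s being placed into $T_i$ (i.e.\ $i\in I(u)$ as well), the two decisions are compatible and no cycle sneaks in. The cleanest way around this is to observe that $u\notin\bigcup_j S_j$ is itself attached to $w\in S_i$, so in $T_i$ the vertex $u$ has its parent in $S_i$ and all of its children (the various $v$'s that chose it) are leaves; thus the subtree of $T_i$ hanging off $S_i$ at $w$ is a star, which is acyclic and adds exactly one internal vertex ($u$) to the path to any of its leaves. Making this star structure explicit — rather than arguing edge-by-edge — is what makes both the spanning-tree check and the independence check routine; the rest is just carefully unwinding Definition~\ref{def:nice collection}.
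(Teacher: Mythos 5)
Your proof is correct and takes essentially the same two-phase approach as the paper: first attach every vertex of $N(S_i)$ as a leaf of $S_i$, then attach each vertex $v$ with $i\in I(v)$ as a leaf of its designated $u_i$, which is already present because $u_i\in N(S_i)$; the independence check is the same case analysis based on whether the extra internal vertex $u_i$ appears. One small tightening worth noting: the clause ``if $u$ was not already placed in $T_i$ in the previous phase'' is vacuous, since $u_i$ is adjacent to $w_i\in S_i$ and hence always lies in $N(S_i)$ and is always placed in phase one --- which also settles your worry about $i\in I(u)$ (it never holds).
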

\begin{proof}

    For each tree $S_i$, add all vertices from $N(S_i)$ to $S_i$ by attaching each of them as a leaf to an arbitrary neighbour in $S_i$, to obtain tree $S_i'$.

    Next, for every vertex $v$ outside $S_i'$, add the edge $vu_i$ guaranteed by the definition of a nice collection of trees where $u_i\notin \bigcup_{i\in [t]} S_i$. Note that that by doing this, for each $i$ we produced a spanning tree $T_i$, as we were only adding leaves.
  To avoid any confusion, note that the particular edge \(u_i w_i\) from \Cref{def:nice collection} need not appear in \(T_i\); in the first step of the proof, \(u_i\) may instead connect to some other vertex in \(S_i\); the definition only guarantees that such an edge exists.
    Let us prove now that $T_1,\ldots, T_t$ is a vertex-independent collection.

    For this, fix two trees $T_i,T_j$, and let us prove that for every $v$ the paths between $r$ and $v$ in $T_i$ and $T_j$ are internally vertex disjoint. We have the following cases:
    \begin{itemize}
        \item $v\in S_i\cup N(S_i)$, and $v\in S_j\cup  N(S_j)$: In this case all internal vertices in the $r$--$v$ path in $T_i$ are contained in $S_i$, while the internal vertices in the $r$--$v$ path in $T_j$ are contained in $S_j$, completing this case.
        \item $v\in S_i\cup N(S_i)$, and $v\notin S_j\cup N(S_j)$: All internal vertices of the first path are in $S_i$, while the second path has all internal vertices in $S_j\cup\{u_j\}$ where $u_j\notin \bigcup_{i \in [t]} S_i\supset S_i$, proving this case.
        \item $v\notin S_i\cup N(S_i)\cup S_j\cup N(S_j)$: Apart from vertices $u_i,u_j$, the internal vertices of the two paths lie in $S_i$ and $S_j$ respectively. Since $u_i$ and $u_j$ are distinct by construction, this completes the proof.
    \end{itemize}
\end{proof}

\begin{proof}[Proof of Theorem~\ref{thm:main}]
We consider two cases depending on $p$.
\vspace{10pt}
\textbf{The dense regime:} $\log^2 n/\sqrt{n} \leq p\leq 0.99$. This is the simpler case, so it serves as a good warm-up. We prove that the following claim holds, which will imply the statement in this case. 

\begin{claim}\label{cl: nice trees in dense case}
    The following holds with probability $1-o(1/n)$ for every integer $k\in[np/2,np]$. For every ordered pair of non-adjacent vertices $(u,v)$ in $G$,
    let $K$ be the set of $u$'s first $\min\{k,d(u)\}$ neighbours (according to the natural ordering on $[n]$). There exists a matching between $K\setminus N(v)$ and $N(v)\setminus K$ which covers the smaller side.
\end{claim}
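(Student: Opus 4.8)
The plan is to fix an ordered non-adjacent pair $(u,v)$ and an integer $k\in[np/2,np]$, expose the edges of $G$ in two stages, and reduce the statement to an application of \Cref{lem:matching}. In the first stage I reveal all edges incident to $u$, which determines $N(u)$, $d(u)$, $k':=\min\{k,d(u)\}$ and hence $K$ (the $k'$ smallest elements of $N(u)$; note $v\notin K$ since $u\not\sim v$); in the second stage I reveal all edges incident to $v$, which determines $N(v)$ and hence $A:=K\sm N(v)$ and $B:=N(v)\sm K$. Since $A\sub N(u)\sm N(v)$ and $B\sub N(v)$, the sets $A$ and $B$ are disjoint and contained in $W:=V(G)\sm\{u,v\}$, and no pair inside $W$ has been examined; hence, conditionally on the first two stages, the bipartite graph of $G$ between $A$ and $B$ is distributed exactly as $G(|A|,|B|,p)$.

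Next I would show that, off an event of probability at most $n^{-10}$, both $|A|$ and $|B|$ are at least $m_0:=np(1-p)/8$. For $|A|$: a Chernoff bound gives $d(u)\ge np/2$ (hence $k'\ge np/2$) with overwhelming probability, and conditioned on the first stage $|A|=k'-|K\cap N(v)|$ is a $\mathrm{Bin}(k',1-p)$ variable of mean at least $np(1-p)/2$, so a second Chernoff bound gives $|A|\ge m_0$ off a tiny event. For $|B|$ I use $|B|\ge|N(v)\sm N(u)|$, which conditioned on the first stage is a $\mathrm{Bin}(n-2-d(u),p)$ variable; the only delicate point — and the place the argument is most likely to go wrong — is the regime where $p$ is close to $0.99$, since then $K$ can swallow almost all of $N(v)$. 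Here one first notes (again by Chernoff) that $u$ has at least $n(1-p)/4$ non-neighbours, so $n-2-d(u)\ge n(1-p)/4$ and the mean of this binomial is at least $np(1-p)/4$, whence $|B|\ge m_0$ off a tiny event. The role of the hypothesis $\log^2 n/\sqrt n\le p$ is precisely to force $m_0\gg\log n$ in all cases; in fact $p\,m_0=np^2(1-p)/8\ge (np)^2(1-p)/(8n)\ge \log^4 n/800$.

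Finally, on the good event both parts are nonempty, so by symmetry assume $|A|\le|B|$ and fix any $B^\ast\sub B$ with $|B^\ast|=|A|$. Conditionally on the first two stages, the bipartite graph of $G$ between $A$ and $B^\ast$ is distributed as $G(|A|,|A|,p)$, and since $p|A|\ge p\,m_0\ge \log^4 n/800\ge 40\log|A|$, \Cref{lem:matching} applies and produces a perfect matching of $A$ into $B^\ast$ — in particular a matching between $A$ and $B$ covering the smaller side $A$ — with failure probability at most $|A|^{-p|A|/(2\log|A|)+4}$, which is super-polynomially small because $p|A|/(2\log|A|)\ge \log^3 n/1600$. (If instead $|B|<|A|$ one argues symmetrically, saturating $B$.) Putting the pieces together, for each fixed triple $(u,v,k)$ the desired matching fails to exist with probability $o(n^{-10})$, and a union bound over the at most $n^2$ ordered pairs $(u,v)$ and the at most $n$ relevant values of $k$ gives the claimed $1-o(1/n)$ (with considerable room to spare). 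The crux of the whole argument is the two-stage exposure that makes the $A$–$B$ bipartite graph genuinely fresh and random, together with the degree/size bookkeeping that keeps both parts comfortably above the matching threshold throughout the range $\log^2 n/\sqrt n\le p\le 0.99$.
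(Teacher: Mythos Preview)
Your proposal is correct and follows essentially the same approach as the paper: a two-stage exposure of the edges at $u$ and then at $v$, Chernoff bounds to show both $K\setminus N(v)$ and $N(v)\setminus K$ are large, and then \Cref{lem:matching} applied to the still-unrevealed bipartite graph between equal-sized subsets of the two parts, followed by a union bound over $(u,v,k)$. Your treatment is in fact slightly more explicit than the paper's in quantifying the lower bound $m_0=np(1-p)/8$ and in handling the regime $p$ close to $0.99$ via $|B|\ge|N(v)\setminus N(u)|$, but the structure and the key ideas are identical.
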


Before we show the proof of the claim, let us see how it implies the statement of the theorem in this case. Suppose $G$ is a graph for which the conclusion of the claim holds. Denote $\delta=\delta(G)$, and note that by \Cref{lem: properties of gnp} we have that \whp{} $\delta\in [np/2, np]$ with room to spare.

Let $u$ be an arbitrary vertex in $G$, fixed to be the root. Apply the claim with $k=\delta$, and let $K=\{v_1,\ldots,v_\delta\}$ be the first $\delta$ neighbours of $u$. Define $S_i=\{u,v_i\}$. Then the claim exactly shows that the $S_i$'s are a nice collection of trees, which completes the proof by \Cref{lem:nice trees are sufficient}. Indeed, to verify that \Cref{def:nice collection} applies to our tree collection, note that if $v\in N(u)$ then $I(v)=\emptyset$, as $v$ is in the neighbourhood of the root $u$. Otherwise, let $I(v)=\{i: v_i\not\in N(v)\}$. Note that since $|N(v)|\geq \delta=|K|$, we have $|K\setminus N(v)|\leq |N(v)\setminus K|$, so there is a matching in $G$ which covers $K\setminus N(v)$ while the other endpoints are in $N(v)\setminus K\subseteq V(G)\setminus\bigcup_{i\in[\delta]} S_i$. For each $v_i\in K\setminus N(v)$, let $u_i\in N(v)\setminus K$ be the vertex it is matched to. Noting that $K\setminus N(v)=\{v_i: i\in I(v)\}$,  we have now defined distinct vertices $u_i$ for all $i\in I(v)$ such that $v$--$u_i$--$v_i$ is a two-edge path from $v$ to $v_i\in S_i$.

\begin{proof}[Proof of \Cref{cl: nice trees in dense case}]
Fix $u$ and $v$, and expose the neighbourhood of $u$. If $v$ is adjacent to $u$ then there is nothing to prove. By assumption on $p$ and Chernoff bounds, we have that with probability at least $1-o(n^{-8})$ we have $d(u)\geq np/2$.
Let $K$ be the set of $u$'s first $\min\{k,d(u)\}\geq \sqrt{n}\log{n}$ neighbours. We now expose the neighbours of $v$;
by the upper bound on $p$, we know that $|N(v)\setminus K |\geq d(v)/10^4\geq \sqrt{n}\log n$  with probability at least $1-o(n^{-8})$. Similarly $|K\setminus N(v)| \geq |K|/10^4\geq \sqrt{n}\log n$. Now, expose all edges between these two disjoint sets $K\setminus N(v)$ and $N(v)\setminus K$. We claim that there is a matching saturating the smaller side with probability at least $1-o(n^{-4})$, which completes the proof by a union bound over all pairs $u,v$ and all of the at most $n$ choices for $k$.
 %For two fixed disjoint sets $A,B$ of equal cardinality $|A|=|B|=n_0\ge \sqrt{n}\log n$, by G has a perfect matching between $A$ and $B$ with probability $1-o(n^{-6})$.
 To see this, choose $A$ and $B$ to be subsets of $N(v)\setminus K$ and $K\setminus N(v)$ respectively of size $n_0=|A|=|B|=\min\{|N(v)\setminus K|, |K\setminus N(v)|\}$.
 Since $n_0\geq \sqrt{n} \log n$ and $p\geq \log^2 n/\sqrt{n}\geq 40 \log n_0/n_0$ we have by \Cref{lem:matching} that there is a required matching between $A$ and $B$ with probability at least $1-o(n_0^{-10})>1-o(n^{-5})$.
 %If $G[A,B]$ has no perfect matching, then  by Hall's theorem there is a set $A_0\subset A$ of cardinality $|A_0|\le |A|/2$ with $|N(A_0,B)|< |A_0|$, or a set $B_0\subset B $ of cardinality $|B_0|\le |B|/2$ with $|N(B_0,A)|< |B_0|$. The probability of this can be bounded from above by $$\sum_{k=1}^{n_0/2} \binom{n_0}{k}^2(1-p)^{k(n_0-k)}\le \sum_{k=1}^{n_0/2} \left(\frac{en_0}{k}\right)^{2k}e^{-pkn_0/2}=o(n^{-8}).$$
\end{proof}

% Each of those trees contains all edges between $u$ and $N(u)$. For each other vertex $w\notin \{u\}\cup N(u)$, let $(x_i,y_i)$ be the edges of the saturating matching between $N(u)\setminus N(v)$ and $N(v)\setminus N(u)$ for $i\in[t]$ where  $t=\min\{|N(u)\setminus N(v)|,|N(v)\setminus N(u)|\}$; let $z_i$ be the neighbours of $v$ in $N(u)$, for $i\in[t+1,t+|N(u)\setminus N(v)|]$.  

% Consider an arbitrary 

% Now, we do the following procedure for each $w\notin \{u\}\cup N(u)$.  
% For $i\leq t$, add the edges $x_iy_i$ and $y_iv_i$ to the tree $T_$

% Take an arbitrary vertex, call it $r$ and expose all of its neighbourhood. Let $T_i$ be $r$ together with $i$th neighbour of $r$. We claim that $\mathscr T=\{T_i\}_{i\in [t]}$ is a nice collection of trees. Indeed, for every vertex $v\in V(G)$, let $I'(v)$ be the set of all $i\in [t]$ such that  $N(v)\cap T_i =  \emptyset$. We find a matching between $N(v)$ and $\bigcup_{i\in I'(v)}{T_i}$, in this regime of $p$ this would be possible. 
\vspace{20pt}
\textbf{The sparse regime:} $C\log n/n \leq p \leq \log^2 n/\sqrt{n}$.

Let $p_2=\frac{10^6\log n}{ n}$, $p_3=p/1000$, and $p_1$ be defined by $1-p=(1-p_1)(1-p_2)(1-p_3)$, notice that $p_1\sim 0.999p$. Let $G_i\sim G(n,p_i)$, and observe that if $G\sim G(n,p)$ then  $G=G_1\cup G_2\cup G_3$.

\medskip 

\paragraph{\emph{Proof outline.}}
We first outline the proof for a fixed root $r\notin S\cup N_G(S)$; the full argument requires slightly more careful probability estimates, since in the end we perform a union bound over all vertices.
 We expose the edges of $G$ in stages, one graph  $G_i$ at a time for a chosen subset of pairs. First we expose the whole graph $G_1$, which typically contains 99.9\% of all edges. We then identify the set $S$ of vertices of small degrees in $G_1$. Since $G_1$ contains a big majority of the edges, we can say that after we eventually reveal all of  $G_2\cup G_3$, the vertices with smallest degrees in $G$ are likely to be in $S$. 
 Next we reveal the edges of $G_2\cup G_3$ touching $S$. Notice that all edges of $G$ touching $S$ have now been revealed. Denote $\delta=\min\{d_G(v)\mid v\in S\}$. As indicated before, we expect $\delta$ to be the minimum degree of $G$.

Next we pick an arbitrary vertex $r\notin S\cup N_G(S)$, and  expose all edges containing it in $G_2\cup G_3$. We will argue that \whp{} the $G$-degree of $r$ outside $S\cup N_G(S)$ is at least $\delta$, and we pick $\delta$ neighbours to be the initial edges of the required $\delta$ trees. Then we expose the edges of $G_2$ outside of $V(G)-(S\cup N_G(S))$. The density of $G_2$ is sufficient  to find \whp{} a family of $\delta$ many paths in $G_2$, denoted by $\mathcal{P}$, all rooted at $r$ and pairwise disjoint otherwise, and of length $\Omega(\log n/np^2)$. We hope to show that these paths (viewed as trees) are nice in $G$ \whp{}. Since the set $P$ of all vertices in these paths  can essentially be viewed as a random set of a given size, by standard concentration bounds, typically no vertex has a lot of its $G_1$-degree into $P$.
After that we expose the neighbours in $G_3$ of every vertex $v\in V- S$ towards $R=V(G)-S-P$. As $S\cup P$ is small, we can show that for all such $v$ \whp{} $d_{G_1\cup G_3 }(v,R)\geq \delta$. For every $v\in V(G)-S$, denote its neighbourhood in $R$ by $B_v$. Also, for every $v\in S$, denote $B_v:=N_G(v)$. Thus, \whp{} for every $v\in V(G)$, $B_v$ is a set of cardinality of size at least $\delta$ outside of $P$. Now, for each $v\in V(G)$ reveal the edges  of $G_3$ between $P$ and $B_v$. Let $\mathscr P$ be the set of all paths, and $H_v$ be the auxiliary bipartite graph between $\mathscr P$ and $B_v$, with an edge connecting $P_i$ to $u\in B_v$ if there is an edge between $P_i$ and $u$ in $G_3$. As the number of paths is $|\mathscr P|=\delta\leq |B_v|$, and the expected degree of each vertex in this auxiliary graph is at least $50\log n$, by a standard Hall-type argument, there is a matching covering $\mathscr P$ with probability $1-o(1/n)$. By a union bound, this holds for every vertex. This is exactly the setup for \Cref{lem:nice trees are sufficient}, which will complete the proof.

Now we fill in the details of the proof stage by stage.
\vspace{1cm}
\paragraph{\textbf{Exposing $G_1$ and identifying small degree vertices and their neighbours.}}
We first expose all edges in $G_1$. By \Cref{lem: properties of gnp}\ref{itm:lowdegset} we have that the set of vertices $S$ of degree at most $np_1-0.9\sqrt{2np_1 \log n}$ is of size at most $|S|\leq n^{0.2}$. 

Note that, by Chernoff bounds and a union bound, with probability $1-n e^{-4.1\log n/2}= 1-o(n^{-1})$ all vertices in $V(G)$ have degree at least $$n(p_2+p_3)-\sqrt{4.1n(p_2+p_3)\log n}$$ in $G_2\cup G_3$. Hence with the same probability vertices outside $S$ will have degree in $G$ at least 
\begin{align}\label{min degree in of large guys}   np_1-0.9\sqrt{2np_1\log n}+n(p_2+p_3)-\sqrt{4.1n(p_2+p_3)\log n}\geq np-0.95\sqrt{2np\log n},\end{align}
where we used $0.9\cdot\sqrt{0.999}+\sqrt{2.05\cdot0.001}<0.95.$ Note that here we just calculated the probability of the mentioned event, we did not yet expose the randomness of $G_2$ and $G_3$.

Denote $\delta=\min\{d_G(v)\mid v\in S\},$ anticipating that the lowest degree vertex will \whp{} be in $S$, as by \Cref{lem: properties of gnp} the minimum degree in $G$ will be $np-(1\pm o_C(1))\sqrt{2np\log n}$.

Now expose all edges in $G_2\cup G_3$  touching $S$. By \Cref{lem: properties of gnp}\ref{itm:mindeg} we can assume that the set $N_G(S)=N_{G_1\cup G_2\cup G_3}(S)$ has size at most $2np|S|$, and by \ref{p:independence} that $S$ is independent.

\paragraph{\textbf{Fixing a root and finding a potentially nice collection of trees.}}

So far we only exposed $G_1$, and the edges touching $S$ in $G$.
Fix an arbitrary root $r\in V(G)$.
Our aim is to show that with probability at least $1-o(n^{-1})$, the vertex $r$ is a root of a collection of $\delta$ many ISTs. Then we would be done by a union bound over all $n$ vertices. 

If $r$ is not in $S$, also expose all edges containing $r$ in $G_2\cup G_3$. In that case, since  by Lemma~\ref{lem: properties of gnp}\ref{itm:sparseboundary} with probability $1-o(n^{-1})$ the root $r$ has at most $\sqrt{np}$  neighbours in $S\cup N(S)$, by \Cref*{min degree in of large guys} with the same probability $r$ has least $\delta$ neighbours outside $S\cup N(S)$. Fix  a subset $Q$ of $\delta$ such  neighbours if $r\notin S$, and otherwise if $r\in S$, then let $Q$ be a set of $\delta$ arbitrary vertices in $N(r)$; note that in the latter case $N(r)\subseteq N(S)$ as $S$ is independent by \Cref{lem: properties of gnp}\ref{p:independence}. The edges from $r$ to $Q$ will be edges which belong to distinct trees in the nice collection we are about to find.

Next, we expose all edges of  $G_2$ not touching $S\cup N(S)$ or $r$. 
We define the set $W$ of vertices which we initially use to find our nice collection of trees: $W:=V(G)-(S\cup N(S)\cup \{r\})+Q$.

Apply \Cref{lem:existenceofroots_random} to $G= G(n, p_2), W, Q, k=\delta, \ell= \lceil\frac{10^5\log n}{np^2}\rceil$, to get a set $P$ such that with probability $1-o(n^{-1})$, there is  collection $\mathscr P$ of vertex-disjoint paths $P_1,\ldots,P_\delta$, each having exactly one endpoint in $Q$ (denote by $r_i$ the endpoint of $P_i$), of length $\lceil\frac{10^5\log n}{np^2}\rceil$, and having $V(\mathscr P)=P\cup Q$. 
%By \Cref{lem:existenceofroots}, applied to the subgraph of  $G(n,p_2)$ induced by the vertex set $W$ in place of $G$,  with $k=\delta$, with probability at least $1-o(n^{-1})$ there is a collection $\mathscr P$ of vertex-disjoint paths $P_1,\ldots,P_\delta$ in $G_2$, each having exactly one endpoint in $Q$ (denote by $r_i$ the endpoint of $P_i$), and of length $\lceil\frac{10^5\log n}{np^2}\rceil\leq n/100k$ each.
%Denote $P=V(\mathscr P)-Q$ and 
Note that  $$|P|= \delta \Big\lceil\frac{10^5\log n}{np^2}\Big\rceil \leq\max\Big\{\frac{2\cdot10^5\log n}{p}, np\Big\}\leq 2\cdot 10^5n/C,$$ since $\delta \leq np$, where the second inequality considers the two cases when the expression under the ceiling is either less or more than $1$. 

From Lemma~\ref{lem:existenceofroots_random}, the set $P$ is a uniformly at random chosen set of size $\delta \lceil\frac{10^5\log n}{np^2}\rceil$ in $W-Q $. 
%Indeed, since we can randomly relabel the vertices in $W-Q$, each subset of that size in $W-Q $ is equally likely to be chosen. 
More formally, if $E$ is the event that in the random graph $G_2[W]$ the collection $\mathscr P$ exists, then we have that $Pr[P=S_1\mid E]=Pr[P=S_2\mid E]$ for every two sets $S_1,S_2$ of size $\delta \lceil\frac{10^5\log n}{np^2}\rceil$, as every vertex is equally likely to be included in $P$. 

Thus in the previously exposed graph $G_1$, with probability $1-o(n^{-1})$ every vertex $v\in V(G)$ has at most $\max\{10^6\log n,2np^2\} $ neighbours in $P-Q$. This is because the  number of neighbours of $v$ in $P$ is distributed hypergeometrically $\text{Hyp}(|W|-|Q|,K,m)$ where $K<1.1np$, and $m\leq \max\{\frac{2\cdot 10^5\log n}{p},np\}$; note that $|W|=(1-o(1))n$. Therefore, the expected number of such neighbours is at most $1.2np\cdot \max\{\frac{2\cdot 10^5\log n/p}{n}, p\}\leq \max\{3\cdot 10^5\log n, 1.2np^2\}$, so the claim follows by Chernoff-type bounds for the hypergeometric distribution (see Theorem 2.10 in \cite{janson2000random}) and a union bound over all vertices.
Also by Lemma~\ref{lem: properties of gnp}\ref{itm:commonneighbors}, for every vertex $v\in V(G)-\{r\}$, the degree of $v$ into $Q$ is at most $\sqrt{np}$.

%\alexey{What is the argument for this?}\ND{We can maybe say it like this: among graphs with those trees attached to the roots (which have measure 1-o(1)), the graphs in the same isomorphism class have each labeling extension equally likely} set of appropriate size $\sim 10^4\log n/p$ in $V(G_1)-(S\cup N(S))$.

\paragraph{\textbf{Exposing $G_3$ to prove niceness}} 
In this part of the proof, we will use the remaining randomness to prove that the collection of paths $\{P_i+\{r_i,r\}\}_{i\in [\delta]}$ is nice, which by \Cref{lem:nice trees are sufficient} is enough to complete the proof.
Consider a vertex $v\in V(G)-\{r\}$. If it is not  in $S$ already, expose its $G_3$-neighbours in $R_v:=V(G)-P-Q-S-N_{G_1}(v)$. Since $|R_v|\geq n-2\cdot 10^5\log n/p-n^{0.2}-4np$, we get by Chernoff bounds that with probability $1-o(n^{-1})$ for every such $v$ its $G_3$-neighbourhood in $R_v$ is of size at least 
\begin{align*}
np_3-\sqrt{4.1np_3\log n}-\max\Big\{10\Big(\frac{2\cdot 10^5\log n}{p}+n^{0.2}+4np\Big)p_3,100\log n\Big\}&\\\geq np_3-\sqrt{4.2np_3\log n}&,
\end{align*}
where we used $\frac{C\log n}{n}<p<\frac{\log ^2n}{\sqrt{n}}$, and the $100\log n$ term ensures that the Chernoff bound gives the required error probability even in cases where the other expression inside the maximum is small. Thus the combined degree of $v$ in $G_1\cup G_3$ into $R_v$ is at least
\begin{align*}
np_1-0.9\sqrt{2np_1\log n}-(10^6\log n + 2np^2+ 2\sqrt{np})& \\
+np_3-\sqrt{4.2np_3\log n}\geq np-0.99\sqrt{2np\log n}&>\delta (G),
\end{align*} 
where we used that the degree of $v$ into $P\cup Q $ is at most $10^6\log n+2np^2+\sqrt{np}$ and the degree of $v$ into $S$ is at most $\sqrt{np}$.

For each $v\notin S\cup \{r\}$, denote $B_v=R_v\cap N_{G_1\cup G_3}(v)$; hence for such $v$ we have $|B_v|\geq \delta$; secondly, for each $v\in S-\{r\}$, denote by $B_v:=N_G(v)\setminus N_G(r)$. Thus, in the first case $|B_v|\geq \delta\geq |I(v)|$
(recall the definition of $I(v)$ from \Cref{def:nice collection}), and in the second one clearly $|B_v|\geq
\delta-|N_G(r)\cap N_G(v)|\geq 
|I(v)|$, as every vertex in $N_G(r)$ already belongs to a distinct tree in our collection.

% and also $|B_v|\geq \delta-\sqrt{np}\sim \delta$ by \Cref{lem: properties of gnp}\ref{itm:commonneighbors}.

Fix any $v\in V(G)-\{r\}$. If $v\notin S$, denote $I=[\delta]$; if $v\in S$, let $I\subseteq[\delta]$ be the set of indices $i$ for which $N_G(v)$ does not contain $r_i$.
Now consider the auxiliary bipartite graph $H_v$ with one part $A=\{P_i\}_{i\in I}$ and the other part $B_v$; recall again that $|B_v|\geq |A|$. 
Note also that by \Cref{lem: properties of gnp}\ref{itm:commonneighbors} we have $|A|\geq \delta-\sqrt{np}\sim\delta$.
There is an edge between $P_i\in A$ and $u\in B_v$ in $H_v$ if there is any edge between $P_i$ and $u$ in the graph $G_3$. For each such pair, we expose the edges between $P$ and $B_v$ in $G_3$.

We want to find a matching in $H_v$ which covers $A$, and then we would be done. Indeed, the collection $\{P_i\cup\{r_i,r\}\}_{i\in[\delta]}$ would satisfy \Cref{def:nice collection}, as we would have the appropriate paths of length two for each $i\in I(v)$. Consider a set $B\subseteq B_v$ of size $n_0=|B|=|A|\sim\delta\geq (1-o_C(1))np$, and let us argue that with probability $1-o(n^{-2})$ there is a perfect matching in $H_v[A,B]$; then we would be done by a union bound over all $n$ choices of $v$.

 There is \emph{no edge} in our auxiliary graph between $P_i$ and $w$ with probability $$1-p_0:=(1-p_3)^{|P_i|}\leq 1-p_3|P_i|/2\leq 1-50\log n/np.$$ Thus each edge in $H_v$ is there with probability at least $p_0\geq 50\log n/np\geq 40 \log n/n_0 $ independently. Hence we can invoke \Cref{lem:matching} to get a perfect matching in $H_v[A,B]$ with probability at least 
\[
1-n_0^{n_0p_0/(2\log n_0)-4}\geq 1-e^{40 \log n/2-4\log n_0}\geq 1-o(n^{-2})
\]
with room to spare. This completes the proof, as each one of the steps we performed holds with probability at least $1-o(n^{-1})$, as required for a union bound over all choices of roots $r$.

\end{proof}

\section{Pseudorandom graphs: Proof of \Cref{thm:ndlambda result}}\label{sec:ndlambda}
In this section we prove our asymptotically optimal result for pseudorandom graphs, \Cref{thm:ndlambda result}.
Before we start with the main part of the proof, we will need to introduce some machinery related to embedding trees into expander graphs. 

\subsection{Trees in expanding graphs}\label{sec:FP}

Here we describe the tree‐embedding method based on work of Friedman and Pippenger, often called the extendability approach. Drawing on the foundational embedding theorems of Friedman--Pippenger \cite{friedman1987expanding} and Haxell \cite{haxell2001tree}, it allows us to build trees of size linear in the number of vertices inside an expander graph. Our main lever is \Cref{thm:FP}, which ensures the existence of such embeddings, and we augment it with the straightforward \Cref{lemma:delete}, which shows how to prune leaves from the tree without breaking its extendability. Maintaining extendability in this way is what allows us to apply \Cref{thm:FP} repeatedly. This extendability framework has been the key ingredient in settling many classic open problems in graph theory (see, e.g., \cite{draganic2022rolling,montgomery2019spanning,draganic2024hamiltonicity}). Here we follow the presentation of Montgomery \cite{montgomery2019spanning}; further details are given below. 
% Before stating the main results, we introduce two definitions, starting with our specific formalisation of expansion.
% \begin{defn}\label{def:expanding}
% 	Let $s \in \mathbb{N}$ and $K > 0$. We say that a graph $G$ is \emph{$(s, K)$-expanding} if for every subset $X \subseteq V(G)$ of size $|X| \le s$ we have $|N_G(X)| \ge K |X|$.
% \end{defn}

We use the following connectivity condition.
\begin{defn}
A graph $G$ is $m$-\emph{joined} for $m>0$, if for every two disjoint vertex subsets $A,B\subseteq V(G)$ of size at least $m$ we have that $e(A,B)>0$.
\end{defn}

We also need the notion of an $(m,D)$-extendable embedding from \cite{montgomery2019spanning}.
\begin{defn}\label{deF:goodness}
Let $m, D \in \mathbb{N}$ satisfy $D \geq 3$ and $m \geq 1$, let $G$ be a graph, and let $S \subset G$ be a subgraph of $G$. We say that $S$ is $(m, D)$-\emph{extendable} if $S$ has maximum degree at most $D$ and
\begin{equation}\label{eq:extendable}
|\Gamma_G(U) \setminus V(S)| \geq (D - 1)|U| - \sum_{x \in U \cap V(S)} (d_S(x) - 1)
\end{equation}
for all sets $U \subset V(G)$ with $|U| \leq 2m$.
\end{defn}

The next theorem is the key technical tool in the section. It allows us to extend an $(m,D)$-extendable embedding of a graph by attaching to one of its vertices a tree of certain size and maximum degree at most $D/2$.
\begin{thm}[Corollary 3.7  in \cite{montgomery2019spanning}]\label{thm:FP}
Let $m, D \in \mathbb{N}$ satisfy $D \geq 3$ and $m \geq 1$, and let $T$ be a tree with maximum degree at most $D/2$, which contains the vertex $t \in V(T)$. Let $G$ be an $m$-joined graph and suppose $R$ is an $(m, D)$-extendable subgraph of $G$ with maximum degree $D/2$. Let $v \in V(R)$ and suppose $|R| + |T| \leq |G| - 2Dm - 3m$. Then, there is a copy $S$ of $T$ in $G - (V(R) \setminus \{v\})$, in which $t$ is copied to $v$, such that $R \cup S$ is $(m, D)$-extendable in $G$.
\end{thm}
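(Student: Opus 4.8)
The plan is to prove \Cref{thm:FP} by the standard extendability method, in the form due to Friedman--Pippenger, Haxell and Montgomery: embed $T$ one vertex at a time, processing $V(T)$ in a breadth-first order $t=x_0,x_1,\dots,x_{|T|-1}$ rooted at $t$, and at each step attach the next tree-vertex $x_j$ as a leaf to the already-placed image $\phi(x_{p(j)})$ of its parent $x_{p(j)}$ (with $p(j)<j$), maintaining $(m,D)$-extendability of the current subgraph throughout. Concretely, set $\phi(t):=v$ and $R_0:=R$; given an $(m,D)$-extendable subgraph $R_{j-1}\supseteq R$ carrying embeddings of $x_0,\dots,x_{j-1}$ with $\phi(x_{p(j)})\in V(R_{j-1})$, pick an image for $x_j$ and let $R_j:=R_{j-1}+\phi(x_j)$. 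Everything reduces to one step, the \emph{one-vertex extension lemma}: if $S$ is $(m,D)$-extendable in an $m$-joined graph $G$, $w\in V(S)$ has $d_S(w)<D$, and $|S|+2Dm+3m\le|G|$, then there is $w'\in\Gamma_G(w)\setminus V(S)$ such that $S+w'$ (with the new edge $ww'$) is again $(m,D)$-extendable. Applying this with $S=R_{j-1}$ and $w=\phi(x_{p(j)})$ produces $\phi(x_j)$ and $R_j$; after $|T|-1$ iterations $R_{|T|-1}$ equals $R\cup S$ for the embedded copy $S$ of $T$, it is $(m,D)$-extendable, and since every $\phi(x_j)$ with $j\ge1$ lies outside $V(R)$ we get $S\subseteq G-(V(R)\setminus\{v\})$ with $t$ mapped to $v$ --- exactly the conclusion.

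Two bookkeeping points make the iteration legal, and the hypotheses are calibrated to them. First, we need $d_{R_{j-1}}(\phi(x_{p(j)}))<D$ at each application: as $R$ and $T$ both have maximum degree at most $D/2$, the image of $t$ ever collects at most $D/2+D/2=D$ incident edges and every other tree-vertex at most $D/2$, so --- attaching the children of any fixed vertex one by one --- the parent has degree at most $D-1$ whenever the lemma is invoked. Second, the room: at every step $|R_{j-1}|\le|R|+|T|\le|G|-2Dm-3m$, which is precisely what the one-vertex lemma asks for. With these in hand the induction runs without further input.

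It remains to sketch the one-vertex lemma. For $U\subseteq V(G)$ with $|U|\le 2m$, write $f_S(U):=|\Gamma_G(U)\setminus V(S)|-(D-1)|U|+\sum_{x\in U\cap V(S)}(d_S(x)-1)$, so that \Cref{deF:goodness} says $f_S(U)\ge0$ for all such $U$; call $U$ \emph{tight} if $f_S(U)=0$. A direct computation (using $w'\notin V(S)$) gives $f_{S+w'}(U)=f_S(U)-\mathbf 1[w'\in\Gamma_G(U)]+\mathbf 1[w\in U]$, so $S+w'$ can fail $(m,D)$-extendability only through a tight set $U$ with $w\notin U$ and $w'\in\Gamma_G(U)$; hence it suffices to choose $w'\in\Gamma_G(w)\setminus V(S)$ adjacent to no tight set avoiding $w$. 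Now $U\mapsto|\Gamma_G(U)\setminus V(S)|$ is submodular and the remaining terms of $f_S$ are modular, so $f_S$ is submodular; combined with $f_S\ge0$ this gives, for any tight $Z$, that $|\Gamma_G(Z)\setminus V(S)|\le(D-1)|Z|\le 2Dm$, so $V(G)\setminus(Z\cup\Gamma_G(Z))$ has at least $|G|-|S|-2Dm-2m\ge m$ vertices, is disjoint from $Z$, and sends no edge to $Z$; $m$-joinedness then forces $|Z|<m$ for \emph{every} tight set $Z$. In particular the tight sets avoiding $w$ are closed under unions --- a pairwise union has fewer than $2m$ vertices, hence lies in the domain of $f_S$ and is tight by submodularity (and so in turn has fewer than $m$ vertices) --- so their union $Z$ is tight with $|Z|<m$. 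Finally, apply $(m,D)$-extendability to $Z\cup\{w\}$ (which has fewer than $2m$ vertices) and subtract the identity $f_S(Z)=0$: this yields $|(\Gamma_G(w)\setminus V(S))\setminus\Gamma_G(Z)|\ge D-d_S(w)\ge1$, and any $w'$ in this set works.

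The substance of the proof lies entirely in the one-vertex lemma; the tree iteration and the degree/room bookkeeping are routine. Within that lemma, the delicate part --- and the main obstacle --- is controlling the tight sets: one must know they are small enough that $Z\cup\{w\}$ still has at most $2m$ vertices, so that \Cref{deF:goodness} can be applied to it, and it is exactly here that $m$-joinedness and the $2Dm+3m$ slack in the size hypothesis are used. No probabilistic estimate enters anywhere; this is a purely deterministic structural statement, which is what lets us invoke it repeatedly when building trees inside an $(n,d,\lambda)$-graph.
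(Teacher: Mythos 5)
This theorem is cited in the paper without proof (it is Corollary~3.7 of the cited Montgomery paper), so there is no internal proof to compare against; judged on its own, your reconstruction is correct and follows the standard Friedman--Pippenger/Haxell/Montgomery extendability argument, matching the proof in the cited source: a BFS one-leaf-at-a-time embedding driven by a one-vertex extension lemma, with submodularity of $f_S$ and $m$-joinedness used to show every tight set has size $<m$, so that the union $Z^\ast$ of tight sets avoiding $w$ stays in the domain of the extendability inequality and $(m,D)$-extendability applied to $Z^\ast\cup\{w\}$ produces the needed neighbour $w'$. One cosmetic slip: for a tight set $Z$ you wrote $|\Gamma_G(Z)\setminus V(S)|\le(D-1)|Z|$, but since $d_S(x)$ may equal $0$ the correct bound is $|\Gamma_G(Z)\setminus V(S)|\le D|Z|\le 2Dm$; the final bound $2Dm$ and everything downstream is unaffected.
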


The following result shows that adding an edge between two vertices of an $(m,D)$-extendable graph maintains its extendability.
\begin{lem}[Lemma 3.9 in \cite{montgomery2019spanning}] \label{lemma:non-leaf edge}
Let $m, D \in \mathbb{N}$ satisfy $D \geq 3$ and $m \geq 1$, let $G$ be a graph, and let $S$ be an $(m, D)$-extendable subgraph of $G$. If $s, t \in V(S)$ with $d_S(s), d_S(t) \leq D - 1$ and $st \in E(G)$, then $S + st$ is $(m, D)$-extendable in $G$.
\end{lem}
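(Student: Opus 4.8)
The plan is to verify the two defining conditions of $(m,D)$-extendability for $S' := S + st$ directly, using the fact that adding an edge inside $S$ leaves its vertex set unchanged. First I would record that $V(S') = V(S)$, and check the maximum-degree requirement: the edge $st$ raises only $d(s)$ and $d(t)$, each by one, and since $d_S(s), d_S(t) \le D - 1$ by hypothesis we get $d_{S'}(s), d_{S'}(t) \le D$, while every other degree is unchanged; hence $\Delta(S') \le D$, as needed.

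For the counting inequality~\eqref{eq:extendable}, fix an arbitrary $U \subseteq V(G)$ with $|U| \le 2m$. Because $V(S') = V(S)$, the left-hand side $|\Gamma_G(U) \setminus V(S')|$ equals $|\Gamma_G(U) \setminus V(S)|$, and the index set of the sum on the right, $U \cap V(S')$, equals $U \cap V(S)$. The only thing that changes is the summands: $d_{S'}(x) = d_S(x)$ for every $x \notin \{s,t\}$ and $d_{S'}(x) = d_S(x)+1$ for $x \in \{s,t\}$, so $\sum_{x \in U \cap V(S')}(d_{S'}(x) - 1) \ge \sum_{x \in U \cap V(S)}(d_S(x) - 1)$. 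Consequently the right-hand side of~\eqref{eq:extendable} for $S'$ is at most the right-hand side for $S$, which by the $(m,D)$-extendability of $S$ is at most $|\Gamma_G(U) \setminus V(S)| = |\Gamma_G(U) \setminus V(S')|$. This gives~\eqref{eq:extendable} for $S'$, and since $U$ was arbitrary, $S'$ is $(m,D)$-extendable in $G$.

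There is no genuine obstacle here: the statement is a monotonicity observation, and the only point that needs care is the bookkeeping that the vertex set $V(S)$ — and hence both the external-neighbourhood term $|\Gamma_G(U)\setminus V(S)|$ and the index set of the degree sum — is untouched by the edge addition, so that the slack in~\eqref{eq:extendable} can only grow. The hypotheses $d_S(s), d_S(t) \le D-1$ are used solely to keep the maximum degree of $S'$ at most $D$.
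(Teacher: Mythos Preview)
Your proof is correct. The paper does not give its own proof of this lemma --- it is simply quoted as Lemma~3.9 of \cite{montgomery2019spanning} --- and your argument is exactly the direct verification of Definition~\ref{deF:goodness} that one would expect: $V(S)$ is unchanged so the left side and the index set of the sum in~\eqref{eq:extendable} are unchanged, while the degree sum can only increase, making the right side weakly smaller.
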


Finally, the last result we need is another simple corollary of the definition of $(m,D)$-extendability which allows for leaf removal.

\begin{lem}[Lemma 3.8 in \cite{montgomery2019spanning}] \label{lemma:delete}
Let $m, D \in \mathbb{N}$ satisfy $D \geq 3$ and $m \geq 1$, let $G$ be a graph, and let $S$ be a subgraph of $G$. Furthermore, suppose there exist vertices $s \in V(S)$ and $y \in N_G(S)$ so that the graph $S + ys$ is $(m, D)$-extendable. Then $S$ is $(m, D)$-extendable. \hfill $\square$
\end{lem}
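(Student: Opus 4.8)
The plan is to deduce the $(m,D)$-extendability of $S$ directly from that of $S' := S + ys$ by comparing the defining inequality \eqref{eq:extendable} for the two graphs, term by term over an arbitrary test set $U$. The point is that $S$ and $S'$ differ by exactly one edge $ys$ and — since $y \in N_G(S)$ is in the \emph{external} neighbourhood, hence $y \notin V(S)$ — by the single extra vertex $y$; so both sides of \eqref{eq:extendable} change in a fully controlled way when passing from $S'$ to $S$.

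First I would record the bookkeeping. We have $ys \in E(G)$ (implicit in the hypothesis that $S+ys$ is a subgraph of $G$), $V(S') = V(S) \sqcup \{y\}$, $d_{S'}(y) = 1$, $d_{S'}(s) = d_S(s) + 1$, and $d_{S'}(x) = d_S(x)$ for all $x \in V(S) \setminus \{s\}$. The maximum-degree clause of \Cref{deF:goodness} is immediate, since $S \subseteq S'$ forces $d_S(x) \le d_{S'}(x) \le D$ for every $x$. For the neighbourhood clause, fix $U \subseteq V(G)$ with $|U| \le 2m$ and apply the extendability of $S'$ to this same $U$. On the left-hand side, since $V(S') \setminus V(S) = \{y\}$ one has $\Gamma_G(U) \setminus V(S) = (\Gamma_G(U) \setminus V(S')) \sqcup (\Gamma_G(U) \cap \{y\})$, so $|\Gamma_G(U) \setminus V(S)| = |\Gamma_G(U) \setminus V(S')| + [\,y \in \Gamma_G(U)\,]$. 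On the right-hand side, writing $f_T(U) := (D-1)|U| - \sum_{x \in U \cap V(T)}(d_T(x) - 1)$, the vertex $y$ contributes $d_{S'}(y) - 1 = 0$ (or is simply absent from the sum), and the only other change occurs at $x = s$, where $d_{S'}(s) - 1 = d_S(s)$ exceeds $d_S(s) - 1$ by one precisely when $s \in U$; hence $f_{S'}(U) = f_S(U) - [\,s \in U\,]$.

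Combining these, $|\Gamma_G(U) \setminus V(S)| \ge f_{S'}(U) + [\,y \in \Gamma_G(U)\,] = f_S(U) - [\,s \in U\,] + [\,y \in \Gamma_G(U)\,]$. The only scenario in which this does not already yield $|\Gamma_G(U) \setminus V(S)| \ge f_S(U)$ is $s \in U$, and this is exactly the (mild) obstacle to be cleared: it is resolved by observing that in that very case $y$ is forced into $\Gamma_G(U)$, because $ys \in E(G)$ and $y \ne s$, so that $[\,y \in \Gamma_G(U)\,] \ge [\,s \in U\,]$ holds unconditionally. Therefore $|\Gamma_G(U) \setminus V(S)| \ge f_S(U)$ for every admissible $U$, which is precisely the extendability inequality for $S$, completing the argument.
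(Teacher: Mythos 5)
Your proof is correct and complete. The paper itself gives no proof of this lemma (it is quoted verbatim from Montgomery \cite{montgomery2019spanning}), and your direct term-by-term comparison of the inequality \eqref{eq:extendable} for $S$ and $S+ys$ — in particular the key observation that $s\in U$ forces $y\in\Gamma_G(U)$ because $ys\in E(G)$ and $y\notin V(S)$ — is exactly the standard argument used in the source.
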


\begin{rem}
Let us remark here that \Cref{thm:FP}, \Cref{lemma:non-leaf edge} and \Cref{lemma:delete} are not algorithmic in the form they are stated. But, in \cite{draganic2022rolling} an algorithmic version is presented for $(n,d,\lambda)$-graphs. Hence, every step of the proof which uses these three results can also be made into a polynomial time algorithm. More precisely, removing and deleting edges while preserving the property of $(m,D)$-extendability can be implemented in polynomial time. We will also use the Lovasz Local Lemma in a few instances to prove the existence of certain sets --- these too can be made into a deterministic polynomial time algorithm~\cite{moser2010constructive}.
\end{rem}

\subsection{Disjoint paths with prescribed vertex-subsets in $(n,d,\lambda)$-graphs}\label{sec:ndlambda preparation}
We'll need the Lovasz Local Lemma (see Lemma 5.1.1 in \cite{alon2016probabilistic})
\begin{lem}[Asymmetric Lovasz Local Lemma]\label{Lem:ALLL}
Let $\mathcal{A} = \{ A_1, \ldots, A_n \}$ be a finite set of events in the probability space. For $ A \in \mathcal{A} $ let $ \Gamma(A)$ denote the neighbours of $A$ in the dependency graph (In the dependency graph, event $A$ is not adjacent to events which are mutually independent). If there exists an assignment of reals $x : \mathcal{A} \to [0,1) $ to the events such that

$$ \forall A \in \mathcal{A} : \Pr(A) \leq x_A \prod_{B \in \Gamma(A)} (1-x_B) $$
then the probability of avoiding all events in $ \mathcal{A} $ is positive.
%$ \Pr\left(\overline{A_1} \wedge \cdots \wedge \overline{A_n} \right) \geq \prod_{i\in \{1,\cdot\cdot\cdot,n\}} (1-x(A_i)). $
\end{lem}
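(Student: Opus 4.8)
The plan is to deduce the statement from the following strengthened claim: for every event $A\in\mathcal{A}$ and every subset $\mathcal{S}\subseteq\mathcal{A}\setminus\{A\}$ with $\Pr\big(\bigcap_{B\in\mathcal{S}}\overline{B}\big)>0$, one has $\Pr\big(A\,\big|\,\bigcap_{B\in\mathcal{S}}\overline{B}\big)\le x_A$. This is stronger than the lemma because, once it is available, the lemma follows by the chain rule: fixing an arbitrary ordering $A_1,\dots,A_n$ of $\mathcal{A}$,
\[
\Pr\Big(\bigcap_{i=1}^{n}\overline{A_i}\Big)=\prod_{i=1}^{n}\Pr\Big(\overline{A_i}\,\Big|\,\bigcap_{j<i}\overline{A_j}\Big)\ge\prod_{i=1}^{n}(1-x_{A_i})>0,
\]
where the inequality uses the strengthened claim (which also guarantees that each event $\bigcap_{j<i}\overline{A_j}$ has positive probability, so the conditional probabilities are legitimate) and the final strict inequality uses $x_{A_i}\in[0,1)$.

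I would prove the strengthened claim by induction on $|\mathcal{S}|$. The base case $|\mathcal{S}|=0$ is immediate from the hypothesis: $\Pr(A)\le x_A\prod_{B\in\Gamma(A)}(1-x_B)\le x_A$, as each factor lies in $(0,1]$. For the inductive step, split $\mathcal{S}=\mathcal{S}_1\cup\mathcal{S}_2$ with $\mathcal{S}_1=\mathcal{S}\cap\Gamma(A)$ and $\mathcal{S}_2=\mathcal{S}\setminus\Gamma(A)$, and write
\[
\Pr\Big(A\,\Big|\,\bigcap_{B\in\mathcal{S}}\overline{B}\Big)=\frac{\Pr\big(A\cap\bigcap_{B\in\mathcal{S}_1}\overline{B}\ \big|\ \bigcap_{C\in\mathcal{S}_2}\overline{C}\big)}{\Pr\big(\bigcap_{B\in\mathcal{S}_1}\overline{B}\ \big|\ \bigcap_{C\in\mathcal{S}_2}\overline{C}\big)}.
\]
For the numerator, bound it above by $\Pr\big(A\mid\bigcap_{C\in\mathcal{S}_2}\overline{C}\big)$, which equals $\Pr(A)$ since $A$ is mutually independent of the entire family $\{C:C\in\mathcal{S}_2\}$ (no member of $\mathcal{S}_2$ is a neighbour of $A$ in the dependency graph), and $\Pr(A)\le x_A\prod_{B\in\Gamma(A)}(1-x_B)$ by hypothesis. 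For the denominator, if $\mathcal{S}_1=\emptyset$ it is $1$ and we are done; otherwise enumerate $\mathcal{S}_1=\{B_1,\dots,B_r\}$ and expand it as the telescoping product $\prod_{\ell=1}^{r}\Pr\big(\overline{B_\ell}\ \big|\ \bigcap_{m<\ell}\overline{B_m}\cap\bigcap_{C\in\mathcal{S}_2}\overline{C}\big)$; each conditioning set here involves at most $|\mathcal{S}|-1$ events, so the induction hypothesis gives $\Pr\big(\overline{B_\ell}\mid\cdots\big)\ge 1-x_{B_\ell}$, whence the denominator is at least $\prod_{\ell=1}^{r}(1-x_{B_\ell})\ge\prod_{B\in\Gamma(A)}(1-x_B)$, using $\mathcal{S}_1\subseteq\Gamma(A)$ and that the remaining factors lie in $(0,1]$. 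Dividing, the factor $\prod_{B\in\Gamma(A)}(1-x_B)$ cancels and we obtain $\Pr\big(A\mid\bigcap_{B\in\mathcal{S}}\overline{B}\big)\le x_A$, completing the induction.

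Since the argument is elementary, there is no genuinely hard step; the care points are purely bookkeeping. One must check that every conditional probability written is well-defined, i.e. that one never conditions on a null event --- this is built into the induction because the strengthened claim implies $\Pr\big(\bigcap_{B\in\mathcal{S}}\overline{B}\big)\ge\prod_{B\in\mathcal{S}}(1-x_B)>0$ as $\mathcal{S}$ grows --- and one must make sure the induction hypothesis is only ever invoked on strictly smaller conditioning sets, which is exactly why the telescoping is arranged so that each factor conditions on at most $|\mathcal{S}|-1$ events. The single conceptual point worth flagging is that the numerator estimate uses mutual independence of $A$ from the whole set $\mathcal{S}_2$ jointly, not merely pairwise; this is precisely what the dependency-graph hypothesis provides.
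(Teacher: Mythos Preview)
The paper does not prove this lemma; it merely quotes it with a citation to Alon--Spencer (Lemma~5.1.1 there). Your argument is the standard textbook proof of the asymmetric Local Lemma---induction on the size of the conditioning set, splitting into neighbours and non-neighbours, bounding numerator by mutual independence and denominator by the telescoping product---and it is correct as written, including your handling of the well-definedness of the conditional probabilities.
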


We will use the following version of the Expander Mixing Lemma  for \((n,d,\lambda)\)-graphs (see for example, Corollary 9.2.5 in~\cite{alon2016probabilistic}) which asserts that the number of edges between any two vertex sets asymptotically matches the expected count in a random graph.

\begin{lem}[Expander Mixing Lemma]\label{lem:EML}
Let \(G\) be an $(n,d,\lambda)$-graph and let \(A,B\subseteq V(G)\). Then
$$
\left|e(A,B)-\frac{|A|\,|B|\,d}{n}\right|
\; \le \; \lambda\sqrt{ |A|\,|B|}.\
$$

\end{lem}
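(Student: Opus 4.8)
The plan is to run the standard spectral argument. Write $M$ for the adjacency matrix of $G$ and recall that, since $G$ is $d$-regular, the all-ones vector $\mathbf 1$ is an eigenvector of $M$ with eigenvalue $\lambda_1=d$. Fix an orthonormal eigenbasis $u_1=\mathbf 1/\sqrt n,\,u_2,\dots,u_n$ with $Mu_i=\lambda_i u_i$. The key elementary observation I would record first is that the hypothesis $\max\{\lambda_2,|\lambda_n|\}\le\lambda$ forces $|\lambda_i|\le\lambda$ for every $i\ge 2$, because $\lambda_n\le\lambda_i\le\lambda_2$.

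Next I would express the edge count as a quadratic form: with $\mathbf 1_A,\mathbf 1_B$ the $0/1$ indicator vectors of $A$ and $B$, one has $e(A,B)=\mathbf 1_A^{\top}M\mathbf 1_B$ under the convention in which the lemma is stated. Expanding $\mathbf 1_A=\sum_i\alpha_i u_i$ and $\mathbf 1_B=\sum_i\beta_i u_i$, the leading coefficients are $\alpha_1=\langle \mathbf 1_A,\mathbf 1/\sqrt n\rangle=|A|/\sqrt n$ and $\beta_1=|B|/\sqrt n$, while Parseval gives $\sum_i\alpha_i^2=\|\mathbf 1_A\|^2=|A|$ and $\sum_i\beta_i^2=|B|$. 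Since the $u_i$ are orthonormal eigenvectors,
$$e(A,B)=\mathbf 1_A^{\top}M\mathbf 1_B=\sum_{i=1}^n\lambda_i\alpha_i\beta_i=d\,\alpha_1\beta_1+\sum_{i=2}^n\lambda_i\alpha_i\beta_i=\frac{|A|\,|B|\,d}{n}+\sum_{i=2}^n\lambda_i\alpha_i\beta_i.$$

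It then remains to bound the error term, which I would do with the eigenvalue estimate, Cauchy--Schwarz, and the Parseval identities:
$$\Big|\sum_{i=2}^n\lambda_i\alpha_i\beta_i\Big|\le\lambda\sum_{i=2}^n|\alpha_i|\,|\beta_i|\le\lambda\Big(\sum_{i=2}^n\alpha_i^2\Big)^{1/2}\Big(\sum_{i=2}^n\beta_i^2\Big)^{1/2}\le\lambda\sqrt{|A|\,|B|},$$
which is exactly the claimed inequality. There is no real obstacle here: the only points demanding a little care are the eigenvalue bound $|\lambda_i|\le\lambda$ for $i\ge2$ (which uses $d$-regularity so that $\lambda_1=d$ is cleanly separated off) and the counting convention for $e(A,B)$ when $A\cap B\neq\varnothing$; everything else is a routine use of the spectral decomposition and Cauchy--Schwarz. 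Since this is a textbook fact, I would most likely just cite it (as is done here) rather than reproduce the computation.
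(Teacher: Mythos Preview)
Your proof is correct and is exactly the standard spectral decomposition argument for the Expander Mixing Lemma. The paper itself does not prove this lemma at all but simply cites it (Corollary 9.2.5 in Alon--Spencer), so your final remark that one would just cite the result is precisely what the authors do.
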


Here $e(A,B)$ represents the count of ordered pairs $(a,b)$ where $a \in A$, $b \in B$, and $ab$ forms an edge in graph $G$. It is worth noting that $A$ and $B$ can overlap; specifically, $e(A,A)$ is twice the number of edges within any subset $A \subseteq V(G)$.
The next statement is a corollary of the Expander Mixing Lemma. An elementary proof can be found in~\cite{draganic2025disjoint}.

\begin{lem}\label{lem:expansion}
Let $G$ be an $(n,d,\lambda)$-graph. Then the following hold:
\begin{enumerate}[label=(\alph*)]
\item\label{p:joint} For each two disjoint sets of vertices $X,Y$ of size $|X|,|Y|>\frac{\lambda n}{d}$ it holds that $e(X,Y)>0$.
\item\label{p:expansion} Fix $0<\varepsilon\leq 1$. Let $B,Z\subseteq V(G)$ be such that every vertex in $B$ has at least $\varepsilon d/3$ neighbours in $Z$. Then for every $k$ with $1<k< (\frac{d}{12\lambda})^2$, and for every subset $X\subseteq B$ of size at most $\frac{n}{12k}$, it holds that $|\Gamma_Z(X)|\geq \varepsilon^2k|X|$.
\end{enumerate}
\end{lem}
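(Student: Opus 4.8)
The plan is to deduce both parts directly from the Expander Mixing Lemma (\Cref{lem:EML}) by short double-counting arguments; the statement is elementary, so the only "work" is tracking constants, and I do not anticipate a genuine obstacle.

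For part \ref{p:joint} I would argue by contradiction. Suppose $X,Y$ are disjoint with $|X|,|Y|>\tfrac{\lambda n}{d}$ but $e(X,Y)=0$. Then \Cref{lem:EML} gives $0\ge \tfrac{|X||Y|d}{n}-\lambda\sqrt{|X||Y|}$, i.e.\ $\sqrt{|X||Y|}\le \tfrac{\lambda n}{d}$, which contradicts $\min\{|X|,|Y|\}>\tfrac{\lambda n}{d}$.

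For part \ref{p:expansion} I would again argue by contradiction: fix $X\subseteq B$ with $1\le |X|\le \tfrac{n}{12k}$, set $Y=\Gamma_Z(X)$, and suppose $|Y|<\varepsilon^2 k|X|$. For the lower bound on $e(X,Y)$: every $v\in X\subseteq B$ has at least $\varepsilon d/3$ neighbours in $Z$, and by definition of $\Gamma_Z(X)$ all of them lie in $Y$, so summing over $v\in X$ yields $e(X,Y)\ge \varepsilon d|X|/3$. For the upper bound, \Cref{lem:EML} gives $e(X,Y)\le \tfrac{|X||Y|d}{n}+\lambda\sqrt{|X||Y|}$; I would bound the first summand using $|Y|<\varepsilon^2 k|X|$ and $|X|\le \tfrac{n}{12k}$ to get $\tfrac{|X||Y|d}{n}<\tfrac{\varepsilon^2 d|X|}{12}\le \tfrac{\varepsilon d|X|}{12}$ (using $\varepsilon\le 1$), and the second summand using $|Y|<\varepsilon^2 k|X|$ together with $\sqrt{k}<\tfrac{d}{12\lambda}$ (from $k<\left(\tfrac{d}{12\lambda}\right)^2$) to get $\lambda\sqrt{|X||Y|}<\lambda\varepsilon|X|\sqrt{k}<\tfrac{\varepsilon d|X|}{12}$. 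Adding these gives $e(X,Y)<\tfrac{\varepsilon d|X|}{6}$, contradicting $e(X,Y)\ge \tfrac{\varepsilon d|X|}{3}$.

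The only point requiring a word of care is the convention in \Cref{lem:EML} that $e(A,B)$ counts ordered incident pairs and permits $A\cap B\neq\emptyset$: here $X\subseteq B$ and $Y=\Gamma_Z(X)\subseteq Z$ need not be disjoint, but this causes no trouble, since the lower bound $e(X,Y)\ge \varepsilon d|X|/3$ is precisely a count of ordered $X$-to-$Z$ incidences, which is exactly the quantity bounded by \Cref{lem:EML}; hence no case distinction is needed.
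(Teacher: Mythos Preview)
Your proof is correct and follows exactly the approach the paper indicates: the paper does not include its own proof of this lemma but states it as a corollary of the Expander Mixing Lemma, referring to~\cite{draganic2025disjoint} for details, and your two short contradiction arguments via \Cref{lem:EML} are precisely such a derivation.
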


The following result, the goal of this subsection, states that we can connect subsets of vertices in a pseudorandom graph through a subset of vertices which contains a fraction of the neighbourhood of every vertex. The proof closely follows the one in~\cite{draganic2025disjoint}.

\begin{lem}\label{lem:connecting S_i}
    Let $0<\varepsilon<10^{-3}$, and let $C\geq C_0(\varepsilon)$. Let $G$ be an $(n,d,\lambda)$-graph with $d/\lambda\geq C$.  Let $R,S_1,\ldots S_t$ be pairwise disjoint vertex-subsets of size at least two, such that each vertex in $S_1\cup\ldots\cup S_t\cup R$ has at least $\varepsilon d$ neighbours in $R$. If $\sum_{i=1}^t |S_i|\log(n/|S_i|)\leq \varepsilon  n/100$, then there exist $t$ vertex-disjoint paths $P_i$, where $P_i$ contains $S_i$, and $V(P_i)\subseteq S_i\cup R$.
\end{lem}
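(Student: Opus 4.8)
The plan is to build the paths $P_i$ one at a time, using the Friedman--Pippenger extendability machinery from \Cref{sec:FP} together with the expansion estimates of \Cref{lem:expansion}. The key point is that each $P_i$ will be a path embedded inside $S_i\cup R$ that contains all of $S_i$, and we will maintain at every stage that the union of the paths constructed so far (together with the vertices of the $S_j$'s not yet used) forms an $(m,D)$-extendable subgraph of $G[(\bigcup_j S_j)\cup R]$ for a suitable choice of $m$ and a constant $D$. First I would fix $D$ to be a small absolute constant (e.g.\ $D=20$, certainly $D/2>2$ so paths have max degree below $D/2$), and set $m=\Theta(\lambda n/d)$, so that $G$ restricted to the relevant vertex set is $m$-joined by \Cref{lem:expansion}\ref{p:joint}; here the hypothesis $d/\lambda\geq C$ with $C$ large makes $m$ much smaller than $n$. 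Crucially, the ambient graph we work in is not $G$ itself but the subgraph induced on the set $R$ together with the $S_i$'s: since every vertex of $S_1\cup\cdots\cup S_t\cup R$ has at least $\varepsilon d$ neighbours in $R$, \Cref{lem:expansion}\ref{p:expansion} (applied with $B$ the whole vertex set in play and $Z=R$) gives the uniform vertex-expansion $|\Gamma_R(X)|\geq \varepsilon^2 k|X|$ needed to verify the extendability inequality~\eqref{eq:extendable} for small sets $X$.

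The construction proceeds as follows. Start with $R_0$ an $(m,D)$-extendable subgraph of the ambient graph containing $S_1$ — for a set of size two this is essentially immediate from the expansion, or one first embeds a tiny extendable seed. At step $i$, suppose we have disjoint paths $P_1,\dots,P_{i-1}$ already found inside $(\bigcup_{j<i}S_j)\cup R'$ where $R'\subseteq R$ is the used portion, and that $P_1\cup\cdots\cup P_{i-1}$ is $(m,D)$-extendable. Now I want $P_i$ to be a path through $S_i=\{a_1,\dots,a_s\}$ lying in $S_i\cup(R\setminus R')$. To get it, I build the path incrementally: having an extendable embedding with current endpoint at (or near) $a_{r}$, apply \Cref{thm:FP} with $T$ a short path to route from $a_r$ to a vertex adjacent to $a_{r+1}$, then use \Cref{lemma:non-leaf edge} to splice in the edge to $a_{r+1}$, and repeat. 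Each application of \Cref{thm:FP} consumes only $O(\log n)$ new vertices of $R$ (a path connecting two prescribed vertices in an $m$-joined expander with $m$ polylogarithmic has logarithmic length), and after processing $S_i$ we use \Cref{lemma:delete} to prune away any auxiliary leaves so that what remains is exactly $P_1\cup\cdots\cup P_i$, still $(m,D)$-extendable. The size bookkeeping is where the hypothesis $\sum_i |S_i|\log(n/|S_i|)\leq \varepsilon n/100$ enters: the total number of $R$-vertices consumed across all connections is $O\!\big(\sum_i |S_i|\log n\big)$, which stays below, say, $n/2$, leaving enough room for the condition $|R\cup T|\leq |G|-2Dm-3m$ in \Cref{thm:FP} to hold at every step (recall $m=o(n)$).

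The main obstacle I anticipate is verifying that $(m,D)$-extendability can actually be initialized and maintained \emph{within the induced subgraph} $G[(\bigcup_j S_j)\cup R]$ rather than in all of $G$ — extendability is a property relative to the host graph, so one must check that the expansion coming from $R$-neighbourhoods (via \Cref{lem:expansion}\ref{p:expansion}) is strong enough to beat the $(D-1)|U|$ requirement in~\eqref{eq:extendable} for all $|U|\leq 2m$. This forces $m$ small enough that $\varepsilon^2 k \geq D-1$ is attainable with $k|U|\leq n/12$, i.e.\ $m=O(\lambda n/d)$ and $D$ a constant tied to $\varepsilon$; that is precisely what $d/\lambda\geq C_0(\varepsilon)$ buys us. A secondary technical point is that the prescribed vertices $a_1,\dots,a_s$ already lie in the host and may have neighbours inside previously-used regions, so when routing to $a_{r+1}$ one should first delete the already-embedded path-structure except its active endpoint (as permitted by the "$G-(V(R)\setminus\{v\})$" clause of \Cref{thm:FP}) to avoid collisions; keeping this bookkeeping consistent across $t$ rounds and $s$ sub-steps each is the fiddly part, but it is routine given the lemmas quoted. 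Once all $P_i$ are built, disjointness is automatic from the construction and $V(P_i)\subseteq S_i\cup R$ by design, completing the proof.
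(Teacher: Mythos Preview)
Your overall framework---working inside $G':=G[(\bigcup_j S_j)\cup R]$, checking $(m,D)$-extendability from \Cref{lem:expansion}\ref{p:expansion}, and invoking the Friedman--Pippenger toolkit---is the same as the paper's. But two steps in your sketch do not go through as written.

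\textbf{Routing to a prescribed target.} \Cref{thm:FP} does \emph{not} let you ``route from $a_r$ to a vertex adjacent to $a_{r+1}$''. It embeds a tree with one prescribed root $v$ and gives no control over where the remaining vertices land; in particular there is no reason the far end of your short path should hit $N(a_{r+1})$. The paper's mechanism is the standard binary-tree trick: from each current endpoint of the linear forest on $S_i$ grow a balanced binary tree (via \Cref{thm:FP}), split the trees into two groups each of size exceeding $m$, invoke $m$-joinedness (\Cref{lem:expansion}\ref{p:joint}) to find an edge between the groups, add that edge with \Cref{lemma:non-leaf edge} to merge two components into one path, and prune the unused branches with \Cref{lemma:delete}. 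Your parenthetical ``a path connecting two prescribed vertices in an $m$-joined expander \ldots\ has logarithmic length'' is true, but it requires exactly this grow-from-both-sides argument, which your main text does not supply. (Incidentally, $m=\Theta(\lambda n/d)$ is linear in $n$, not polylogarithmic.)

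\textbf{Length accounting.} Even with the tree trick in hand, connecting $a_r$ to $a_{r+1}$ one pair at a time costs $\Theta(\log n)$ per merge, totalling $\Theta(\sum_i |S_i|\log n)$, and this is \emph{not} bounded by the hypothesis $\sum_i |S_i|\log(n/|S_i|)\le \varepsilon n/100$: take $t=1$ with $|S_1|$ a small constant fraction of $n$ and let $n\to\infty$. The paper avoids this by growing trees from \emph{all} current endpoints of $S_i$ simultaneously. If there are $s'$ components, each tree gets size $\Theta(n/s')$ and depth $\log_2(n/s')$, so the $j$-th merge costs $O(\log(n/(|S_i|-j)))$; summing over $j$ gives $O(|S_i|\log(n/|S_i|))$, which is precisely the quantity the hypothesis controls.
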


\begin{proof}
    By \Cref{lem:expansion} applied with $B:=S_1\cup\ldots\cup S_t\cup R$, and $Z:=R$ and $k=1/\varepsilon^3$, we get that $|\Gamma_R(X)|\geq |X|/\varepsilon$ for every $X\subseteq B$ of size at most $|X|\leq \varepsilon^3n/12.$
    
    Consider the graph $G'=G[R\cup S_1\ldots\cup S_t]$, and observe that $|G'|\geq \varepsilon n/2$ by the Expander Mixing Lemma, as
    \[
    |G'|\varepsilon d\leq e(V(G'),V(G'))\leq |G'|^2\frac{d}{n}+\lambda |G'|,
    \]
which together with $\lambda<\varepsilon d/2$ implies the claim.
    Note that $G'\subset G$ is $m$-joined for $$m=\varepsilon^5n\geq \lambda n/d+1$$ by \Cref{lem:expansion}. Furthermore, by the above we have that the independent set $I:=I[S_1\cup\ldots\cup S_t]$ is an $(m,10)$-extendable subgraph of  $G'$. Indeed, by the first paragraph of the proof
    \begin{equation*}
|\Gamma_G(U) \setminus V(I)|=|\Gamma_R(U)| \geq |U|/\varepsilon\geq10|U|  - \sum_{x \in U \cap V(I)} (d_I(x) - 1),
\end{equation*}
for all $|U|\leq 2m$, since $d_I(x)\geq 0$ for all $x\in V(I)$ (in fact, $d_I(x)=0)$.
Hence, we can use \Cref{thm:FP}, \Cref{lemma:delete}, and \Cref{lemma:non-leaf edge} to attach and remove leaves from $V(I)$ in $G'$, as long as the total size of the current forest is at most $$s:=\varepsilon n/4\leq |G'|-20m-3m.$$
We connect the sets $S_i$ one by one through $R$ using the given machinery, by always adding a path between two vertices in $S_i$.
We will now show how to construct the required path for $S_1$, and later we will argue how to do the same for the remaining $S_i$'s analogously. 

 First, we extend the $(m,10)$-extendable subgraph $I$ in $G'$ by attaching  disjoint binary trees of size $\frac{s}{3|S_1|}$ (and of depth at most $\log_2 \frac{s}{3|S_1|}$) to each vertex in $S_1$. Note that this can be done using~\Cref{thm:FP}, since the total size of the constructed forest (including all vertices in $S:=S_1\cup\ldots\cup S_t$ whose number is at most $\varepsilon n/100$) is at most $s/3+|S|\leq s/2$. Now, consider the set $L_1$ of the vertices of an arbitrary choice of $\lfloor|S_1|/2\rfloor$ of the trees, and $L_2$ the vertices of the remaining trees. Since both sets contain at least $s/9> \lambda n/d$ vertices each, there is an edge between the two sets by~\Cref{lem:expansion}\ref{p:joint}. Adding this edge to the current $(m,10)$-extendable graph creates a path $P$ of length at most $2\log_2 \frac{s}{3|S_1|}+1$ between two vertices in $S_1$, while the constructed graph stays $(m,10)$-extendable by \Cref{lemma:non-leaf edge}. Now we can use Lemma~\ref{lemma:delete} to remove all vertices (leaf-by-leaf) of the attached trees, except those in the path $P$, so that we are again left with an $(m,10)$-extendable subgraph.

We now repeat this process, until we have connected all vertices in $S_1$. More precisely, we perform the following inductively described procedure. Suppose that after the $i$-th step, for $0<i<|S_1|-1$, we have constructed internally vertex-disjoint paths $P_1,P_2,\ldots, P_i$ with endpoints in $S_1$, that satisfy the following invariants:
\begin{itemize}
\item the internal vertices are in $R$; 
\item the length of $P_j$ for all $j\leq i$ is at most $2\log_2 \left(\frac{n}{|S_1|-j+1}\right)+1$;
\item The graph $I[S]\cup P_1\cup\ldots \cup P_i$ is a linear forest and an $(m,10)$-extendable subgraph of $G'$;
\item the number of components (i.e., paths) in $I[S_1]\cup P_1\cup\ldots \cup P_i$ is $|S_1|-i$.
\end{itemize}
In the $(i+1)$-th step we want to construct a path $P_{i+1}$ that maintains the invariants. 
In the end, after the $(|S_1|-1)$-th step, we will then have the required path which contains all vertices in $S_1$.

For each path in the linear forest $I[S_1]\cup P_1\cup\ldots \cup P_i$, choose one endpoint, and let $S_1'$ denote the set of these endpoints, noting that $|S_1'|=|S_1|-i$. As before, extend the $(m,10)$-extendable subgraph $I[S]\cup P_1\cup\ldots \cup P_i$ by attaching a binary tree to each vertex in $S_1'$, each one of size $\frac{s}{3|S_1'|}$, and hence of depth at most $\log_2\left(\frac{s}{|S_1'|}\right)\leq\log_2  \left(\frac{n}{|S_1|-i}\right)$. We can indeed use~\Cref{thm:FP}, as the current forest consists of $S$ and the now constructed trees (which together contain at most $s/2$ vertices), and the previously found paths $P_1,\ldots, P_i$, whose total number of vertices is at most: 
% \begin{align}
% \sum_{j\leq i}\left(2\log_2  (\frac{n}{|X_1|-j-1})+1\right)&\leq 
% 2\sum_{j\leq |X_1|-2}\left(\log_2  (\frac{n}{|X_1|-j-1})\right)+|X_1|\notag \\
% &\leq |X_1|+2\sum_{j\leq |X_1|-2}\log_2  (\frac{n}{|X_1|-j-1})\notag \\
% &\leq |X_1|+2\sum_{j\leq |X_1|-2}\log_2  (\frac{n}{j})\notag \\
% &=|X_1|+2(|X_1|-2)\log_2  n+2\sum_{j\leq |X_1|-2}\log_2  (-j)\notag \\
% &=|X_1|+2(|X_1|-2)\log_2  n-2\log_2  ((|X_1|-2)!)\notag \\
% &\leq |X_1|+2(|X_1|-2)\log_2  n-2(|X_1|-2)\log_2  (\frac{|X_1|-2}{e})\notag \\
% &\leq |X_1|+2(|X_1|-2)\log_2  n-2(|X_1|-2)\log_2  (\frac{12n}{de})\notag \\
% &=|X_1|+2(|X_1|-2)\log_2 (de/4)\leq \frac{24n\log_2  d}{d}\leq s/2,\label{total number in paths}
% \end{align}

\begin{align}
\sum_{1\leq j\leq i}\left(2\log_{2} \left(\frac{n}{|S_1|-j+1}\right)+1\right) 
&\leq |S_1|+2\sum_{j=1}^{|S_1|-2}\log_{2} \left(\frac{n}{|S_1|-j+1}\right)\notag \\
&= |S_1|+2\sum_{j=3}^{|S_1|}\log_{2} \left(\frac{n}{j}\right)\notag \\
&\leq|S_1|+2|S_1|\log_{2} n-2\sum_{j=3}^{|S_1|}\log_{2} j\notag \\
&=|S_1|+2|S_1|\log_{2} n-2\log_{2} \left((|S_1|)!/2\right)\notag \\
&\leq |S_1|+2|S_1|\log_{2} n-2|S_1|\log_{2} \left(\frac{|S_1|}{e}\right)\notag \\
&\leq |S_1|(1+2\log_2\Big(\frac{ne}{|S_1|}\Big))\leq 3|S_1|\log_2\Big(\frac{n}{|S_1|}\Big)\leq 3\varepsilon n/100.\label{total number in paths}
\end{align}

Hence, in total, the forest at any point contains less than $s$ vertices, so we can successfully apply~\Cref{thm:FP}.

As $i<|S_1|-1$, by the induction hypothesis we have at least $|S_1|-i\geq 2$ trees, so we can group them into two collections each containing at least a third of all vertices. Since the total number of vertices in all trees is $s/3$, each of the two collections contains at least $\frac{s/3}{3} > m$ vertices. Thus, there must be an edge between them.
 This edge closes a path $P_{i+1}$ connecting two endpoints in the linear forest $I[S_1]\cup P_1\cup\ldots \cup P_i$. Adding this edge via \Cref{lemma:non-leaf edge} and removing the unused edges (leaf-by-leaf) using~\Cref{lemma:delete}, we are left with an $(m,10)$-extendable subgraph 
$I[S_1]\cup P_1\cup\ldots \cup P_{i+1}$ of $G'$.
Let us verify that all invariants are still satisfied. 

By construction, all internal vertices of the new path are in $R$.
The length of the new path is at most twice the depth of the trees plus one, i.e. it is of length at most $2\log_{2}  \left(\frac{n}{|S_1|-i}\right)+1$.
By construction, the subgraph $I[S_1]\cup P_1\cup\ldots \cup P_i\cup P_{i+1}$ is still $(m,10)$-extendable. Furthermore, as $P_{i+1}$ connects two endpoints of a linear forest through a path which is internally vertex-disjoint from $I[S_1]\cup P_1\cup\ldots \cup P_i$, we obtain a new linear forest with one less component than before, resulting in $|S_1|-i-1$ components/paths, as required. 

Continuing from the final $(m,10)$-extendable embedding (which now connects all vertices in $S_1)
$, we can now follow the same procedure for the sets $S_2,\ldots, S_{t}$, one by one, always extending the current $(m,10)$-extendable subgraph. Here we stress that when we are about to do the procedure for the set $S_i$, we continue extending the $(m,10)$-extendable subgraph left after having completed treating $S_{i-1}$, repeating the exact same argument as before, only bearing in mind that we have already used up some vertices in $R$ for the previous steps of the embedding.

The only thing we need to verify is that the total number of vertices in the forest which we construct is always at most $s$. But this is straightforward to see, as the total number of vertices in the current binary trees which we use in every step is at most $s/2$ by construction. On the other hand, the total number of vertices in paths used for all $S_i$, by (\ref{total number in paths}), is at most $$3\sum_{i=1}^t|S_i|\log_2\Big(\frac{n}{|S_i|}\Big)\leq 3\varepsilon n/100<s/2,$$ so we are done.

\end{proof}

% Next we state a fact about typical expansion in random graphs. We omit the standard proof.

% \begin{prop}\label{prop: simple gnp properties}
%     Let $G\sim G(n,p)$ for $ \frac{10^6 \log n}{n}$. The following hold \whp{}:
%     \begin{itemize}
%         \item Let $S$ be a fixed set of vertices in $V(G)$ of size at most $np$. Then \whp{} for every $X\subseteq V(G)$ we have $|N(X)\setminus S|\geq 10|X|$ if $|X|\leq n/50$. 
%         \item $G$ is $n/100$-joined.
%     \end{itemize}
% \end{prop}
\subsection{ISTs in pseudorandom graphs}

We now prove our second main result, restated here for the reader’s convenience.
\NDLambdaResult*
\begin{proof}
    We will prove the claim with $10\varepsilon$ instead of $\varepsilon$; we may also assume that $\varepsilon<10^{-3}$ since the lemma only gets stronger by decreasing $\epsilon$.
    Define $k=(1-10\varepsilon)d$.
    Fix an arbitrary root $r$, and $k$ of its neighbours $L:=\{v_1,\ldots,v_k\}$. 
The proof follows from the next two claims. 

\begin{claim}\label{cl:good partition}
There exists a partition of $V(G)\setminus(\{r\}\cup L):=U\cup R\cup S_1\cup\ldots\cup S_k$ such that the following holds for every $v\in V(G)$:     
\begin{itemize}
    \item Let $I(v)$ be the set of $i\in[k]$ such that $v$ is not a neighbour of $v_i$. Then for each $i\in I(v)$ there are edges $u_iy_i$, where $u_i\in N_U(v)$ are distinct vertices, and $y_i\in S_i$;
    \item $|N_{R}(v)|\geq \varepsilon^3 d$;
    \item $|S_i|\leq \varepsilon^{10}n/(d\log d)$.
\end{itemize}
\end{claim}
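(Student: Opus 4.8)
The plan is to build the partition from a \emph{random colouring} of $V':=V(G)\setminus(\{r\}\cup L)$, and to extract a good colouring via the Lovász Local Lemma (\Cref{Lem:ALLL}) rather than a union bound: the latter is hopeless since the hypotheses $d/\lambda=\omega(\log d)$, $d=o(n/\log^{2}n)$ only force $d\to\infty$ (they allow, e.g., $d=(\log\log n)^{4}$), so the per-vertex failure probabilities below are only $e^{-\Theta_\varepsilon(d)}$.

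Colour each $v\in V'$ independently: put $v\in R$ with probability $\varepsilon^{2}$, put $v\in S_i$ with probability $p:=\varepsilon^{10}/(2d\log d)$ for each $i\in[k]$, and otherwise put $v\in U$ (so $\Pr[v\in U]\ge 1-\varepsilon$). The value of $p$ is forced from both sides: $\mathbb{E}|S_i|=pn\le\tfrac12\varepsilon^{10}n/(d\log d)$ yields the third bullet after concentration, while $pd=\varepsilon^{10}/(2\log d)$ keeps, in expectation, $\Theta_\varepsilon(d/\log d)$ of a vertex's $U$-neighbours joined to any fixed $S_i$ --- the slack behind the first bullet.

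For $v\in V(G)$ let $A_v$ be the union of the events (i) $|N_R(v)|<\varepsilon^{3}d$; (ii) $|N_R(v)|>\varepsilon d$; (iii) $d(v,\bigcup_i S_i)>\varepsilon d$; (iv) the set system $\{\Gamma_{N_U(v)}(S_i)\}_{i\in I(v)}$ admits no system of distinct representatives. If $A_v$ fails then (i) is the second bullet, and (since (ii),(iii) fail) $|N_U(v)|\ge d-1-|N_L(v)|-2\varepsilon d\ge|I(v)|+\varepsilon d$, so the SDR in (iv) gives distinct $u_i\in N_U(v)$ with edges $u_iy_i$, $y_i\in S_i$, which is the first bullet. (For $v=r$ we have $I(r)=\varnothing$; for $v=v_j$ the same holds with $|N(v_j)\cap V'|\ge d-k=10\varepsilon d$.) For $i\in[k]$ let $B_i$ be the event $|S_i|>\varepsilon^{10}n/(d\log d)$. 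Each $A_v$ is determined by the colours within distance $2$ of $v$, hence mutually independent of all $A_{v'}$ with $\mathrm{dist}(v,v')>4$ (at most $\approx d^{4}$ local dependencies), whereas each $B_i$ depends on every colour. By Chernoff $\Pr[(i)],\Pr[(ii)],\Pr[(iii)]\le e^{-\Theta_\varepsilon(d)}$, and $\Pr[(iv)]\le d^{-\omega(1)}$ (next paragraph), so $\Pr(A_v)\le d^{-10}$ once $d\ge d_0(\varepsilon)$; and $\Pr(B_i)\le e^{-\Theta(\varepsilon^{10}n/(d\log d))}=n^{-\omega(1)}$, because $d=o(n/\log^{2}n)$ forces $n/(d\log d)=\omega(\log n)$. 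Apply the asymmetric LLL with $x_{A_v}=d^{-9}$ and $x_{B_i}=2\Pr(B_i)$: the $B_i$'s are globally dependent but carry negligible total weight $\sum_i x_{B_i}=n^{-\omega(1)}$, so they scarcely affect the inequality for each $A_v$, while the inequality for $B_i$, namely $\Pr(B_i)\le x_{B_i}(1-d^{-9})^{|V'|}\prod_{j\neq i}(1-x_{B_j})$, holds since $(1-d^{-9})^{|V'|}=e^{-\Theta(n/d^{9})}$ dwarfs $\Pr(B_i)=e^{-\Theta(n/(d\log d))}$ once $d^{8}\gg\log d$. Thus a colouring avoiding every $A_v$ and every $B_i$ exists, and it is the required partition.

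\emph{The crux --- bounding $\Pr[(iv)]$.} By Hall's criterion for systems of distinct representatives, if (iv) occurs there is a pair $(J,W)$ with $J\subseteq I(v)$, $W\subseteq N_U(v)$, $|W|=|J|-1$, such that no $u\in N_U(v)\setminus W$ is adjacent to $\bigcup_{j\in J}S_j$; we union-bound over $\ell:=|J|$ and over the $\le\binom{k}{\ell}\binom{d}{\ell-1}\le(e^{2}d^{2}/\ell^{2})^{\ell}$ such pairs. For fixed $(J,W)$ this event says that no vertex of $\Gamma_{V'}(N_U(v)\setminus W)$ receives a colour in $J$; since $|N_U(v)\setminus W|=|N_U(v)|-(\ell-1)\ge\varepsilon d$ (using $\ell\le|I(v)|$ and $|N_U(v)|\ge|I(v)|+\varepsilon d$), \Cref{lem:expansion}\ref{p:expansion} --- applied with $Z=V(G)$ and the expansion parameter as large as permitted, namely $\asymp(d/\lambda)^{2}$ --- gives $|\Gamma_{V'}(N_U(v)\setminus W)|\gtrsim_{\varepsilon}\min\{d^{3}/\lambda^{2},\,n\}$, so this event has probability at most
\[
(1-\ell p)^{\,\Theta_{\varepsilon}(\min\{d^{3}/\lambda^{2},\,n\})}\ \le\ \exp\!\Big(-\,\ell\cdot\Theta_{\varepsilon}\!\big(\min\{(d/\lambda)^{2}/\log d,\ n/(d\log d)\}\big)\Big).
\]
Since $d/\lambda=\omega(\log d)$ gives $(d/\lambda)^{2}/\log d=\omega(\log d)$, and $n/(d\log d)=\omega(\log n)=\omega(\log d)$, this exponent equals $\ell\cdot\omega(\log d)$, which beats the $(e^{2}d^{2}/\ell^{2})^{\ell}=e^{O(\ell\log d)}$ from the union bound term-by-term once $d\ge d_0(\varepsilon)$; summing over $\ell\ge1$ gives $\Pr[(iv)]\le d^{-\omega(1)}$. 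I expect this step to be the main obstacle. Both asymptotic hypotheses enter it essentially (through $(d/\lambda)^{2}=\omega(\log^{2}d)$ and $n/(d\log d)=\omega(\log n)$), and one must also cope with the mild dependence between the events ``$u\not\sim\bigcup_{j}S_j$'' for different $u\in N_U(v)$ and the event that determines $N_U(v)$ itself --- most cleanly via a two-phase exposure of the colours (first reveal $U$, then refine $V'\setminus U$ into $R$ and the $S_i$), after which the expansion estimate survives: $V'\setminus U$ meets $\Gamma_{V'}(N_U(v)\setminus W)$ in $\omega(\log d)\cdot|N_U(v)\setminus W|$ vertices, which is all the calculation needs.
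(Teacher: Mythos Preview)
Your proposal is correct in outline and shares the paper's scaffolding: a random colouring of $V'$ with the same three part-types (sets $S_i$ of density $\Theta_\varepsilon(1/(d\log d))$, a reservoir $R$, and a large residual $U$), Chernoff for the easy bullets, a Hall/matching argument for the first bullet, and the asymmetric LLL to glue per-vertex events $A_v$ with the global events $B_i$. The LLL bookkeeping, the dependency radius of $4$, and the treatment of the $B_i$'s are all essentially as in the paper.

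The genuine difference is in how the first bullet is certified. The paper, \emph{before} colouring, fixes for each $v$ a family of $t_v+\varepsilon d$ vertex-disjoint stars of size $C\log^2 d$ centred in $B_v'\subseteq N(v)$ with leaves outside $N(v)\cup L\cup\{r\}$ (this is where the generalized Hall step and the hypothesis $d/\lambda=\omega(\log d)$ enter). After colouring, it checks (1) enough centres land in $U$ and (2) a matching exists in the auxiliary bipartite graph ``$I(v)$ vs.\ stars with a leaf in $S_i$''. Because centres and leaves are disjoint fixed sets, (1) and (2) are independent, and the Hall calculation in their Proposition is over genuinely independent coin flips. You instead run Hall directly on the system $\{\Gamma_{N_U(v)}(S_i)\}_{i\in I(v)}$ and use the full two-step expansion of $N_U(v)\setminus W$; this avoids the star-building step altogether, which is a real simplification, but it buys the dependency you flag: $N_U(v)$ is itself random, so the set whose expansion you invoke changes with the colouring. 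Your sketch of the fix (two-phase exposure) is the right idea, but as written there is a small gap --- the inequality $|N_U(v)\setminus W|\ge \varepsilon d$ is used inside the bound on $\Pr[(\mathrm{iv})]$ while it is only guaranteed on $\overline{(\mathrm{ii})}\cap\overline{(\mathrm{iii})}$. The clean repair is to take $W\subseteq N^*:=N(v)\cap V'$ (a fixed set) rather than $W\subseteq N_U(v)$, pull a single event ``$|N^*\setminus U|>3\varepsilon d$'' (probability $e^{-\Theta_\varepsilon(d)}$, independent of $(J,W)$) outside the union bound, and then condition on the colours of $N^*$: on the complement one has $|X\cap U|\ge \Theta(\varepsilon d)$ for $X=N^*\setminus W$, so $|\Gamma_{V'}(X\cap U)\setminus N^*|\ge \Theta_\varepsilon(\min\{d^3/\lambda^2,n\})$ deterministically, and the remaining Phase-2 probability is $(1-\ell p)^{\Theta_\varepsilon(\min\{d^3/\lambda^2,n\})}$ exactly as you compute. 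With that adjustment your route goes through; the paper's pre-built stars are precisely the device that lets it avoid this conditioning altogether.
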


\begin{claim}\label{cl:connecting paths}
    Given a partition as in \Cref{cl:good partition}, there exist disjoint paths $P_1\ldots,P_k$ such that $$S_i\cup \{v_i\}\subseteq V(P_i)\subseteq S_i\cup R \cup \{v_i\}.$$

    % \begin{itemize}
    %     \item $\mathcal{P}_0$ is a collection of $k$ paths, which connects each vertex in $N$ to a unique set $S_i$. 
    %     \item The endpoints of paths in $\mathcal{P}_i$ are in $S_i$, and $S_i$ induces a connected set in the union of paths in $\mathcal P_i$.
    %     \item All paths in $\bigcup_{i=0}^k \mathcal P_i$ are pariwise internally vertex disjoint, with all internal vertices in $U$.
    % \end{itemize}
\end{claim}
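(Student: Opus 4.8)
The plan is to derive \Cref{cl:connecting paths} directly from the connecting lemma, \Cref{lem:connecting S_i}, applied to the ``augmented'' sets $S_i^{+}:=S_i\cup\{v_i\}$ together with the reservoir $R$ furnished by \Cref{cl:good partition}: a path through $S_i^{+}$ whose vertex set lies in $S_i^{+}\cup R$ is precisely a path through $S_i\cup\{v_i\}$ whose vertex set lies in $S_i\cup R\cup\{v_i\}$. First I would strip off degenerate indices: set $J:=\{i\in[k]:S_i\neq\emptyset\}$, and for $i\notin J$ simply take $P_i$ to be the single vertex $v_i$, which trivially satisfies $S_i\cup\{v_i\}=\{v_i\}\subseteq V(P_i)\subseteq R\cup\{v_i\}$. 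Note that $v_i\notin R\cup S_1\cup\cdots\cup S_k$ for every $i$, since $v_i\in L$ while $R$ and all $S_j$ lie in $V(G)\setminus(\{r\}\cup L)$; hence for $i\in J$ the set $S_i^{+}$ has size at least $2$, and the one-vertex paths chosen for $i\notin J$ are automatically disjoint from one another and from any path whose vertex set is contained in $R\cup\bigcup_j S_j\cup\{v_j:j\in J\}$.

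Next I would verify the hypotheses of \Cref{lem:connecting S_i} for the family $\{S_i^{+}\}_{i\in J}$ and the reservoir $R$, taking that lemma's parameter to be $\varepsilon^{3}$ (recall $\varepsilon<10^{-3}$, so $\varepsilon^{3}<10^{-3}$, and $d/\lambda=\omega(\log d)$ exceeds $C_0(\varepsilon^{3})$ once $d$ is large). Pairwise disjointness and the size-$\geq 2$ requirement follow from the previous paragraph together with the fact that $V(G)\setminus(\{r\}\cup L)=U\cup R\cup S_1\cup\cdots\cup S_k$ is a partition and the $v_i$ are distinct. The neighbourhood hypothesis, that every vertex of $R\cup\bigcup_{i\in J}S_i^{+}$ has at least $\varepsilon^{3}d$ neighbours in $R$, is immediate from the second bullet of \Cref{cl:good partition}, which gives $|N_R(v)|\geq\varepsilon^{3}d$ for \emph{every} $v\in V(G)$. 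For the budget hypothesis $\sum_{i\in J}|S_i^{+}|\log(n/|S_i^{+}|)\leq\varepsilon^{3}n/100$, use that $x\mapsto x\log(n/x)$ is increasing on $(0,n/e)$ and that $|S_i^{+}|=|S_i|+1\leq 2|S_i|\leq 2\varepsilon^{10}n/(d\log d)$ by the third bullet of \Cref{cl:good partition}; hence each summand is at most $\frac{2\varepsilon^{10}n}{d\log d}\log\bigl(\frac{d\log d}{2\varepsilon^{10}}\bigr)\leq 6\varepsilon^{10}n/d$ once $d$ is large (as $\log\log d+10\log(1/\varepsilon)\leq 2\log d$ then), and summing over the at most $k\leq d$ indices in $J$ gives a total of at most $6\varepsilon^{10}n\leq\varepsilon^{3}n/100$.

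With all hypotheses in hand, \Cref{lem:connecting S_i} produces pairwise disjoint paths $\{P_i\}_{i\in J}$ with $S_i^{+}\subseteq V(P_i)\subseteq S_i^{+}\cup R=S_i\cup R\cup\{v_i\}$; combining these with the trivial one-vertex paths for $i\notin J$ (disjoint from the rest by the first paragraph) yields the required $P_1,\dots,P_k$. I do not expect a genuine obstacle here: \Cref{cl:connecting paths} is essentially a repackaging of \Cref{lem:connecting S_i}, and the only two points that need a moment's care are the empty-$S_i$ degenerate case and the verification of the $\sum|S_i|\log(n/|S_i|)$ budget. For the latter, the essential feature is that \Cref{cl:good partition} bounds $|S_i|$ by $\varepsilon^{10}n/(d\log d)$ rather than merely by $o(n/d)$: this forces $\log(n/|S_i|)=O(\log d)$ instead of $O(\log n)$, so the sum over the at most $d$ parts stays linear in $n$ with a constant we can make negligible.
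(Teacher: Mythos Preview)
Your proposal is correct and follows essentially the same route as the paper: apply \Cref{lem:connecting S_i} to the augmented sets $S_i\cup\{v_i\}$ together with the reservoir $R$, using the size bound $|S_i|\le\varepsilon^{10}n/(d\log d)$ to control the $\sum|S_i|\log(n/|S_i|)$ budget. You are in fact slightly more careful than the paper, which glosses over the degenerate case $S_i=\emptyset$ and does not make explicit that the lemma is invoked with parameter $\varepsilon^3$ to match the hypothesis $|N_R(v)|\ge\varepsilon^3 d$ from \Cref{cl:good partition}.
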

Before we prove the two claims, we observe that they suffice to prove the theorem. 
%Indeed, for a given $i\in[k]$, the subgraph $G_i:= \bigcup \mathcal P_i$ of $G$ is connected, and contains $S_i$. Furthermore, by construction the $G_i$'s are vertex disjoint, except at vertex $v$. We obtain $T_i$ by choosing a spanning tree in $G_i$ that contains the edge $rv_i$.
Define $T_i:=P_i+(r,v_i)$.
Since in \Cref{cl:good partition} we have $u_i\in U$, and $U\subset V(G)\setminus \bigcup_{i\in[k]}T_i$, using the first bullet point of Claim~\ref{cl:good partition} we have that $\{T_i\}_{i\in [k]}$ is a nice collection of trees, so we are done by \Cref{lem:nice trees are sufficient}.

\begin{proof}[Proof of \Cref{cl:good partition}]

Consider an arbitrary vertex $v\neq r$. Let $t_v:=k-|N_L(v)|$, and denote $B_v:=N(v)\setminus N_L(v)\cup\{r\}$, noting that $|B_v|=  t_v+10\varepsilon d$. At most $\varepsilon d$ vertices in $B_v$ have at least $\varepsilon d$ neighbours in $B_v\cup L$ as by the Expander Mixing Lemma we have

\[
e(B_v,B_v\cup L\cup \{r\})\leq d\cdot2d\cdot d/n+\lambda \sqrt{d\cdot 2d}\leq \varepsilon^2 d^2
\]
where we used $d=o(n)$ and $\lambda=o(d)$.
Thus, we can take a subset $B_v'\subseteq B_v$ with $|B_v'|= t_v+\varepsilon d$, of vertices with at least $\varepsilon d$ neighbours outside $W:=B_v\cup L\cup\{r\}$. 
Thus every subset $X\subseteq B_v'$ has $|N_{G-W}(X)|\geq C\log ^2d|X|$ by \Cref{lem:expansion} by setting $k_{4.7}=C\log^2 d$, $Z_{4.7}=V(G)\setminus W$, for $C>C_0(\varepsilon)$ large enough so that the rest of the argument goes through.
Hence, by the generalised Hall's theorem we can find a collection of $t_v+\varepsilon d$ vertex disjoint stars $Q_i$ of size $C\log^2 d$ each, where $i\in [t_v+\varepsilon d]$, with centers in $B_v'$. The need for size $C\log^2 d$ is the bottleneck that dictates the assumed bound on $d/\lambda$.

    Now we construct the required partition. For each vertex $v$ in $V(G)-L-\{r\}$, randomly assign it to one of the following sets, with given probability:
    \begin{itemize}
        \item with probability $\varepsilon^{11}/(d\log d)$ put $v$ into $S_i$, for each $i\in [k]$;
        \item with probability $\varepsilon/100$ put $v$ into the \emph{reservoir} $R$;
        \item with the remaining probability, put $v$ into $U$, the set of unassigned vertices.
    \end{itemize}
\noindent 
For each $v\in V(G)$, denote by $A_v$ the intersection of the following events:
\begin{enumerate}[label=(\arabic*)]
    \item There are at least $t_v+\varepsilon d/100$ vertices in $B_v'\cap U$. By possibly relabelling, let $Q_1,\ldots,Q_{t_v+\varepsilon d/100}$ be their previously chosen stars.
    \item There is an injection $\sigma:I(v)\rightarrow [t_v+\varepsilon d/100]$, such that $Q_{\sigma(i)}$ contains a leaf in $S_i$, for all $i\in I(v)$.
    \item $|N_{R}(v)|\geq \varepsilon^3 d$.
\end{enumerate}
Note that if $\bigcap_v A_v$ holds, then the first two bullets of \Cref{cl:good partition} hold (with $u_i$ the center of $Q_{\sigma(i)}$ and $y_i$ the leaf which guaranteed in (2)).
Let us estimate the probability $P[A_v]$. First, note that the first property is satisfied with probability at least $1-e^{-\Theta(d)}$, as $\mathbb E[|B_v'\cap U|]\geq |B_v'|(1-\varepsilon/3)\geq (1-\varepsilon/3)t_v+\varepsilon d/2\geq t_v+\varepsilon d/6$, so a Chernoff bound applies.

Conditioning on the first property, for the second one consider the auxiliary bipartite graph $H_v$ with first part $I(v)$ of size $t_v$, and second part $[t_v+\varepsilon d/100]$, where $(i,j)$ is an edge if there is a vertex $w\in S_i\cap Q_j$. To show that $H_v$ has a matching covering $I(v)$ with sufficiently high probability, set $A := I(v)$ and $B := [t_v + \varepsilon d / 100]$. We claim the following.

\begin{prop}\label{prop: matching in pseduorandom}
With probability at least $1 - d^{-6}$ there is a matching between $A$ and $B$ that saturates $A$.
\end{prop}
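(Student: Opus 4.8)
The plan is to prove the proposition by applying Hall's theorem to $H_v$ and bounding the probability that Hall's condition fails via a union bound (we may assume $t_v\ge 1$, since otherwise $A=I(v)=\emptyset$ and there is nothing to prove). The structural feature that makes this work is that the stars $Q_1,\dots,Q_{t_v+\varepsilon d/100}$ are pairwise vertex-disjoint and all their leaves lie in $V(G)-L-\{r\}$. Consequently, conditionally on property~(1) — which constrains only the random assignment of the centres, all of which lie in $B_v'$ — each leaf of each $Q_j$ independently lands in a fixed $S_i$ with probability $\varepsilon^{11}/(d\log d)$, and for distinct indices $j$ the random sets $\{\,i\in[k]:\ Q_j\text{ has a leaf in }S_i\,\}$ are determined by disjoint collections of vertex-assignments and hence are mutually independent.

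First I would use the contrapositive of Hall's theorem: if $H_v$ has no matching saturating $A=I(v)$, then there are $A'\subseteq A$ and $B'\subseteq B=[t_v+\varepsilon d/100]$ with $|B'|=|A'|-1$ and $e_{H_v}(A',B\setminus B')=0$. The key observation, valid uniformly in $a:=|A'|\le t_v$, is that $|B\setminus B'|=t_v+\varepsilon d/100-(a-1)\ge\varepsilon d/100$; that is, however large $A'$ is, at least $\varepsilon d/100$ of the stars are forced to avoid $\bigcup_{i\in A'}S_i$ entirely. For a fixed such pair $(A',B')$ I would bound $\Pr[e_{H_v}(A',B\setminus B')=0]$: a single star $Q_j$ has at least $\tfrac12 C\log^2 d$ leaves, each of which lands in $\bigcup_{i\in A'}S_i$ independently with probability $a\varepsilon^{11}/(d\log d)$ (a union of $a$ disjoint events), so the chance that $Q_j$ avoids $\bigcup_{i\in A'}S_i$ is at most $\bigl(1-a\varepsilon^{11}/(d\log d)\bigr)^{C\log^2 d/2}\le\exp\!\bigl(-aC\varepsilon^{11}\log d/(2d)\bigr)$; multiplying over the $\ge\varepsilon d/100$ vertex-disjoint (hence independent) stars indexed outside $B'$ gives $\Pr[e_{H_v}(A',B\setminus B')=0]\le d^{-aC\varepsilon^{12}/200}$.

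Finally I would union-bound over the at most $\binom{t_v}{a}\binom{|B|}{a-1}\le(ed)^a(2ed)^a\le d^{3a}$ choices of $(A',B')$ (using that $d$ is large, which is forced by the hypothesis $d/\lambda=\omega(\log d)$), obtaining that $H_v$ fails to have a matching saturating $A$ with probability at most $\sum_{a\ge1}d^{3a}\cdot d^{-aC\varepsilon^{12}/200}=\sum_{a\ge1}d^{\,a(3-C\varepsilon^{12}/200)}$. Choosing $C\ge C_0(\varepsilon)$ large enough that $C\varepsilon^{12}/200\ge 10$ makes every exponent at most $-7a$, so the whole sum is at most $2d^{-7}<d^{-6}$, which proves the proposition.

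The part that needs the most care is ensuring that the per-pair probability $d^{-aC\varepsilon^{12}/200}$ decays in $a$ fast enough to beat the $d^{3a}$-sized union bound: this is precisely why the stars $Q_j$ are chosen of size $\Theta(\log^2 d)$ rather than $\Theta(\log d)$, and why $C$ (equivalently $d/\lambda$) must be large — with stars of size only $\Theta(\log d)$ the two factors would merely balance and the union bound would not close. Two smaller points to verify are that conditioning on property~(1) genuinely leaves the leaf-assignments i.i.d.\ with the stated marginals (it does, since property~(1) is an event about the centres only), and that the estimate ``at least $\varepsilon d/100$ stars lie outside $B'$'' is applied uniformly in $a$, which is what keeps the argument robust when $A'$ is nearly all of $A$.
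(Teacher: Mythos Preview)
Your proof is correct and follows essentially the same approach as the paper's: apply Hall's theorem, bound the probability that a fixed pair $(A',B')$ witnesses a violation using the independence of leaf assignments across the disjoint stars, and union-bound over all such pairs. Your explicit discussion of why conditioning on property~(1) preserves the leaf marginals (centres lie in $W$, leaves outside $W$) is a useful clarification that the paper leaves implicit, but the argument is otherwise the same up to constants.
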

\begin{proof}
    If $H_v[A,B]$ does not have a required matching matching, then  by Hall's theorem there is a set $A_0\subset A$ of cardinality $|A_0|\le t_v$ with $|N(A_0,B)|< |A_0|$. 
    For $A_0$ to have no neighbours in $B_0\subseteq B$, it is necessary that, for every $i \in B_0$, all leaves of the stars $Q_i$ lie outside the sets $S_j$ with $j \in A_0$. The probability of this event, for fixed sets $A_0$ and $B_0$, where $|B_0|=|B|-|A_0|$, is
    \[
     \left(1-\frac{|A_0|\varepsilon^{11}}{d\log d}\right)^{|B_0|C\log^2 d }\leq e^{-\frac{|A_0|\varepsilon^{11}}{d\log d}(t_v+\varepsilon d/100-|A_0|)C\log^2 d}\leq d^{-\frac{\varepsilon^{12}C|A_0|}{100}} 
    \]
    where we used that $|A_0|\leq t_v$.
    Thus, by a union bound, the probability that 
    Hall's condition for side $A$ is violated in $H_v$ is at most
    \begin{align*}
    \sum_{\ell=1}^{t_v} \binom{t_v}{\ell}\binom{t_v+\varepsilon d}{\ell}d^{-\frac{\varepsilon^{12}C\ell}{100}}\le \sum_{\ell=1}^{t_v} \left(\frac{e(t_v+\varepsilon d)}{\ell}\right)^{2\ell}d^{-\frac{\varepsilon^{12}C\ell}{100}}\\\leq \sum_{\ell=1}^{t_v} \left(3d\right)^{2\ell}d^{-100 \ell}\leq d^{-6},
    \end{align*}
where we used that $d>t_v$. This completes the proof of the \Cref{prop: matching in pseduorandom}.
\end{proof}

Finally, the third property also holds with probability at least $1-e^{-\Theta(d)}$, as the expected number of neighbours in $R$ is at least $\varepsilon d\cdot \varepsilon/100$. 
Hence we conclude that $P[A_v]\geq 1-3d^{-6}$. Note that each event $A_v$ depends on at most $d^{4}$ other events, since this bounds the number of vertices within distance $4$ of $v$; for vertices $w$ at distance at least $5$, the events $A_w$ are independent of $A_v$.

Let $E_i$ be the event given by the third bullet of \Cref{cl:good partition}, i.e. the event that $|S_i|\leq \varepsilon^{10}n/(d\log d)$. By Chernoff bounds, we have $P[E_i]\geq 1-e^{-\Theta(n/(d\log d))}\geq 1-n^{-3}$, as by assumption $d=o(n/\log^2n)$. Note that each $E_i$ might depend on each event $A_v$ (the number of which is $n$) and each other $E_j$ (of which we have at most $d$).

Thus, by the Asymmetric Local Lemma (\Cref{Lem:ALLL}), there is a choice of sets $R,S_1,\ldots , S_k$ such that each $A_v$ and each $E_i$ hold. Indeed, by setting $x_v=d^{-5}$ for all $v\neq r$, and $x_i=n^{-2}$ for all $i\in[k]$, the condition of the lemma is satisfied as:

\[
P\Big[\overline{A_v}\Big]\leq 3d^{-6}\leq  d^{-5}(1-d^{-5})^{d^4}\cdot (1-n^{-2})^d\sim d^{-5},
\]
and:
\[
P\Big[\overline{E_i}\Big]=e^{-\Theta(n/(d\log d))}\leq n^{-2}(1-d^{-5})^n(1-n^{-2})^d\sim e^{-\Theta(n/d^5)}.
\]
This completes the proof.
\end{proof}

Now we turn to the second claim.
\begin{proof}[Proof of \Cref{cl:connecting paths}]
Denote $S_i':=S_i+v_i$.
We apply \Cref{lem:connecting S_i},  to the sets $R , S_1',\ldots, S_k'$. %\alexey{we should say a bit about checking the conditions of \Cref{lem:connecting S_i} and I'm wondering specifically about $\sum |S_i|\log (n/|S_i|)\le o_d(1)n$. I assume this is supposed to come from the 3rd bullet point of the first claim --- but I'm wondering if there should be an upper bound on $n/|S_i|$ mentioned there (say $n/|S_i|\le 200d\log^2 d$ which we easily have). Without this I don't get what we do to stop the $\log(n/|S_i|)$ from being as large as $\log n$, though perhaps I'm misinterpreting how the calculation is supposed to go.} 
%
%\ND{I added the argument for the sum just below.}
Indeed, we can do that since each term in $\sum |S_i'|\log (n/|S_i'|)$ is maximized when $|S_i'|$ is as large as possible; 
by \Cref{cl:good partition} we can assume that $|S_i|\leq \varepsilon^{10} n/(d\log d)$, thus we have 

\[\sum_{i=1}^k |S_i'|\log (n/|S_i'|)\le d\cdot  \frac{\varepsilon^{10}n}{d\log d}2\log d=2\varepsilon^{10} n.
\]
This connects the set $S_i'=S_i+v_i$ via path $P_i$ through $R$, such that the $P_i$'s are pairwise vertex-disjoint. This completes the proof of the second claim.
\end{proof}

As we have proven the two required claims, this completes the proof of \Cref{thm:ndlambda result}   
\end{proof}

\section{Concluding Remarks}
 We have proven the Zehavi-Itai IST conjecture for random graphs \( G(n, p) \) for essentially the whole range of relevant values of the edge probability $p(n)$, specifically for  \(C\log n / n \leq p(n)\leq 0.99\), for a large enough absolute constant $C>0$.  We have also shown that the conjecture holds asymptotically (namely, one can find $(1-o(1))\kappa(G)$ ISTs) for a broad class of weakly pseudorandom graphs (specifically, \((n,d,\lambda)\)-graphs, under a rather mild assumption on the eigenvalue ratio $d/\lambda= \omega(\log d)$). As a consequence, the conjecture also holds asymptotically for random \(d\)-regular graphs when \(d\) is at least a large constant. In other words, \(d\)-regular graphs typically contain a family of  \((1 - o(1))d\) ISTs.

Several problems remain open. For general \(k\)-connected graphs, the best known lower bound~\cite{censor2017tight} guarantees only \(\Omega(k / \log^2 n)\) independent spanning trees, and improving this remains an exciting challenge. Our techniques offer some insight in this direction, and it would be worthwhile to investigate whether one can exploit a dichotomy between good expansion and small cuts to obtain stronger lower bounds.

The conjecture is known to hold for small values of \(k\), specifically for \(k \leq 4\). It would be interesting to explore the other end of the spectrum, namely  graphs with linear connectivity, and to determine whether one can prove the conjecture in this setting or at least to improve significantly  the lower bound from~\cite{censor2017tight}.

Another immediate challenge is to strengthen \Cref{thm:ndlambda result} by replacing the hypothesis $d/\lambda=\omega(\log d)$ with $d/\lambda \ge C(\varepsilon)$ for a sufficiently large constant $C(\varepsilon)$ depending only on $\varepsilon$. We can notice that this strengthening holds under the additional assumption that $G$ is $K_{s,t}$-free for fixed integers $s,t\ge2$: a constant spectral gap already yields polynomial expansion in this case. In particular, in our proof one can then find stars of size $d^{c}$ for some $c>0$, whereas the proof requires only stars of order $C\log^{2} d$, which we obtained using the Expander Mixing Lemma. Indeed, combining the Expander Mixing Lemma with a Kővári--Sós--Turán--type argument (see, e.g.,~\cite{liu2017proof}) gives the needed polynomial expansion and hence the required stars.

Finally, it would be interesting to address the remaining range of $p$ for binomial random graphs as well, and in particular at the weak threshold for connectivity, when $\log n/n\leq p=O(\log n/n)$. 

Although our result is stated existentially, our proof yields a polynomial-time algorithm, as can be verified by examining each step of the argument.

\vspace{1cm}

\noindent\textbf{Acknowledgement.} Part of this research was conducted at Princeton University during the workshop on Graph expansion and its applications in April-May 2025. We would like to thank Matija Buci\'c for organizing the workshop, and Princeton University for great hospitality.

\bibliographystyle{abbrv}

\end{document}